\documentclass{amsart}
\usepackage[utf8]{inputenc}
\usepackage{color}

\usepackage[margin=1.0in]{geometry}

\usepackage{amsmath}
\usepackage{amsthm}
\usepackage{amssymb}
\usepackage[all]{xy}
\usepackage{enumitem}
\usepackage{graphicx}
\usepackage{mdwlist}
\usepackage{mathrsfs}\usepackage{longtable}
\usepackage{tikz}
\usetikzlibrary{patterns}
\usepackage{quiver}
\usepackage{caption}
\usepackage{subcaption}
\usepackage{makecell}
\usepackage{ textcomp }
\usepackage{longtable}

\newcommand\setItemnumber[1]{\setcounter{enum\romannumeral\@enumdepth}{\numexpr#1-1\relax}}

\newcommand{\p}{\mathbb{P}}
\DeclareMathOperator{\Bir}{Bir}
\DeclareMathOperator{\Aut}{Aut}

\DeclareMathOperator{\PGL}{PGL}

\DeclareMathOperator{\SL}{SL}

\DeclareMathOperator{\ZZ}{\mathbb{Z}}
\DeclareMathOperator{\QQ}{\mathbb{Q}}
\DeclareMathOperator{\RR}{\mathbb{R}}

\DeclareMathOperator{\KK}{K}

\DeclareMathOperator{\NS}{NS}

\DeclareMathOperator{\Sing}{Sing}

\DeclareMathOperator{\Exc}{Exc}

\DeclareMathOperator{\Ind}{Ind}
\DeclareMathOperator{\Ker}{Ker}

\DeclareMathOperator{\Reg}{Reg}
\DeclareMathOperator{\PsReg}{Ps-reg}
\DeclareMathOperator{\prReg}{pr.Reg}
\DeclareMathOperator{\prPsReg}{pr.Ps-reg}
\DeclareMathOperator{\AS}{AS}

\newcommand{\xdashrightarrow}[2][]{\ext@arrow 0359\rightarrowfill@@{#1}{#2}}

\newtheorem{theorem}[equation]{Theorem}
\newtheorem{lemma}[equation]{Lemma}

\newtheorem{corollary}[equation]{Corollary}

\theoremstyle{definition}
\newtheorem{defin}[equation]{Definition}

\newtheorem{example}{Example}

\newtheorem{question}[equation]{Question}

\makeatletter
\@addtoreset{equation}{section}
\makeatother

 \title{Regularization and algebraic stability of birational automorphisms}
 \author{Alexandra Kuznetsova}

\address{Higher School of Modern Mathematics in Moscow Institute of Physics and Technology, Dolgoprudny, Russia; Laboratory of Algebraic Geometry in Higher School of Economics, Moscow, Russia;  Steklov Mathematical Institute of Russian Academy of Sciences, Moscow, Russia.}

 \email{sasha.kuznetsova.57@gmail.com}

\begin{document}
\maketitle

\begin{abstract}
We compare five natural classes of birational automorphisms of an algebraic variety: projectively regularizable, regularizable, projectively pseudo-regularizable, pseudo-regularizable, and those admitting an algebraically stable model. These classes provide different geometric frameworks in which the dynamics of a birational automorphism, and in particular its first dynamical degree, can be studied. We determine the general inclusion relations among them and show that, for $\p^k$ with $k\geqslant 3$, all the resulting non-trivial inclusions are strict; we prove this by giving explicit examples.
\end{abstract}

\section{Introduction}
Given a birational automorphism $f$ of an algebraic variety $X$, we are usually interested in finding a birational model of $X$ on which one can easily study the geometric properties of $f$. For instance, if $f$ is a birational automorphism of finite order, then by \cite[Theorem 0.1]{Equivariant_resolution}, there exists a smooth projective birational model $\widetilde{X}$ of $X$ on which $f$ induces a regular automorphism. Thus, we can reduce the study of finite-order birational automorphisms of $X$ to that of regular automorphisms of its models. This reduction is highly useful for the classification of finite subgroups of birational automorphism groups; indeed, this method has been used in \cite{Bayle_Beauville}, \cite{Dolgachev_Iskovskikh}, and many other works.

If $f$ has infinite order and non-trivial dynamics, it is considerably more difficult to find a sufficiently nice birational model of $X$ that clarifies the properties of $f$. In particular, by \cite{CDX}, a very general birational automorphism of $\p^k$ for $k\geqslant 2$ does not admit a projective model on which $f$ induces a regular automorphism. On the other hand, an infinite-order birational automorphism can exhibit interesting dynamics on the variety $X$. One of the most important dynamical characteristics of $f\in\Bir(X)$ is its first dynamical degree (see Section \ref{section: preliminaries} for the definition). The first dynamical degree $\lambda_1(f)$ is a positive real number and a birational invariant of the pair $(X,f)$ that describes the asymptotic behavior of the inverse image operators~\mbox{$(f^n)^*\colon \NS(X)\to \NS(X)$}, where $\NS(X)$ is the Neron--Severi group of $X$. The properties of $\lambda_1(f)$ are deeply connected to the geometry and dynamics of $f$. For instance, if $\dim(X) = 2$, then $\lambda_1(f)$ is necessarily an algebraic integer, and in many cases, the properties of its minimal polynomial determine whether~$f$ induces a regular automorphism on a projective birational model of $X$ (see \cite{Blanc-Cantat}).

Therefore, we are interested in finding birational models of $X$ on which one can explicitly estimate or compute $\lambda_1(f)$. Let us introduce several classes of well-behaved birational models for $f$ that arise naturally in the study of its dynamical degree.
\begin{defin}\label{definition: main}
Let $X$ be a smooth algebraic variety over an algebraically closed field, and let $f\in \Bir(X)$ be a birational automorphism of $X$.
\begin{enumerate}
 \item We call $f$ a {\bf pseudo-automorphism} if $X$ contains Zariski open subsets $U$ and $V$ whose complements have codimension at least $2$ in $X$, such that $f$ induces an isomorphism $U \to V$.
 \item We say that $f$ is {\bf algebraically stable} if $X$ is projective and $(f^n)^* = (f^*)^n$ as operators on the Neron--Severi group $\NS(X)$ for all $n\geqslant 1$.
 \item  We say that $f$ is {\bf regularizable} (resp.\ {\bf pseudo-regularizable}) if there exists a smooth birational model $Y$ of $X$ such that $f$ induces a regular automorphism (resp.\ a pseudo-automorphism) of $Y$. We denote the set of regularizable (resp.\ pseudo-regularizable) birational automorphisms of $X$ by $\Reg(X)$ (resp.\ $\PsReg(X)$).
 \item We say that $f$ is {\bf projectively regularizable} (resp.\ {\bf projectively pseudo-regularizable}) if there exists a smooth projective birational model $Y$ of $X$ such that $f$ induces a regular automorphism (resp.\ a pseudo-automorphism) of $Y$. We denote the corresponding sets of birational automorphisms of $X$ by $\prReg(X)$ and $\prPsReg(X)$, respectively.
 \item We say that $f$ {\bf admits an algebraically stable model} if there exists a smooth projective birational model $Y$ of $X$ such that $f$ induces an algebraically stable birational automorphism of $Y$. We denote the set of birational automorphisms of $X$ admitting an algebraically stable model by $\AS(X)$.
\end{enumerate}
\end{defin}
The strongest of these conditions is regularizability especially if the regularization model $Y$ is a projective variety, in which case $f\in \prReg(X)$.
In this setting, the dynamics of $f$ can be studied using the geometry of $Y$. However, there are instances where $f$ is not regularizable, even on a quasi-projective model of $X$. A highly useful criterion for regularizable birational automorphisms was introduced in \cite{Lonjou_Urech_CAT0}. In that paper, the authors associate special $\mathrm{CAT}(0)$ cube complexes to an algebraic variety $X$, whose vertices are birational models of~$X$. Special subgroups of $\Bir(X)$ act on these complexes, and $f$ is regularizable if each complex contains an $f$-fixed vertex.

The idea of studying birational automorphisms that admit an algebraically stable model was suggested in \cite{Diller_Favre}. This class of birational automorphisms is particularly interesting because it allows one to readily compute their dynamical degrees. Namely, if $f$ is an algebraically stable birational automorphism of a smooth projective variety $X$, then~\mbox{$\lambda_1(f)$} equals the spectral radius of the pullback operator $f^*\colon \NS(X)\to \NS(X)$. In particular, if $f\in \AS(X)$, then $\lambda_1(f)$ is an algebraic number.

Projectively pseudo-regularizable birational automorphisms form a particularly useful subclass of $\AS(X)$. Such automorphisms also frequently arise in the study of birational maps with non-trivial dynamical degrees (see, for instance, \cite{Blanc_Pseudo, Bedford_Kim_Pseudo, BCK}).

Finally, the class $\PsReg(X)$ is of interest to us because, like $\Reg(X)$, it can be detected using the $\mathrm{CAT}(0)$ cube complex method. It serves as a natural generalization of the classes $\Reg(X)$ and $\prPsReg(X)$; however, relatively little is currently known about this class.

\medskip

The main goal of this paper is to compare the five classes of birational automorphisms $\prReg(X)$, $\Reg(X)$, $\prPsReg(X)$, $\PsReg(X)$, and $\AS(X)$, and to discuss the geometric or dynamical advantages of an element~$f$ lying in each of these classes.

Let us first examine the situation in low dimensions. If $X$ is a curve, then all its birational automorphisms are projectively regularizable; hence all five classes coincide.
If $X$ is a surface, then any $f\in \Bir(X)$ admits an algebraically stable model by \cite{Diller_Favre}; moreover, a pseudo-automorphism is necessarily regular. That is, we have the following stratification of the group of birational automorphisms of a surface $X$:
\[
\prReg(X)  = \prPsReg(X)\subset \Reg(X) =\PsReg(X) \subset \AS(X) = \Bir(X).
\]
If $X$ is a rational surface, all inclusions in this diagram are strict. Examples \ref{example 1} and \ref{example 2} below provide birational automorphisms of $\p^2$ lying in the complements $\Reg(\p^2)\setminus \prReg(\p^2)$ and $\Bir(\p^2)\setminus \Reg(\p^2)$, respectively.

If $\dim X \geqslant 3$, the situation is much more complicated. First, for higher-dimensional varieties, there is a difference between pseudo-regularizable and regularizable birational automorphisms. Moreover, by~\cite{BDJK}, there exists a birational automorphism $f$ of $\p^3$ whose first dynamical degree is a transcendental number, which implies that $f$ cannot be algebraically stable. The relations between these classes of birational automorphisms can be described by the following theorem.
\begin{theorem}
 Let $X$ be an algebraic variety over an uncountable field $K$ of characteristic zero. Then the classes $\prReg(X)$, $\Reg(X)$, $\prPsReg(X)$, $\PsReg(X)$ and $\AS(X)$ of birational automorphisms of $X$ fit into the following diagram.
 \begin{equation}\label{diagram}
 \xymatrix@R=0.5em@C=1em{
 \prReg(X) \, \ar@{}[r]|-*[@]{\subset}^*+{\text{\tiny ex. \ref{example 3}}}\, \ar@{}[dd]|-*[@]{\subset}^*+{\text{\tiny ex. \ref{example 1}}} & \prPsReg(X) \, \ar@{}[r]|-*[@]{\subset}^*+{\text{\tiny \hspace{2em}ex. \ref{example 2}}} \, \ar@{}[dd]|-*[@]{\subset}^*+{\text{\tiny ex. \ref{example 4}}}& \AS(X)\, \ar@{}[rd]|-*[@]{\hspace{-1.5em}\subset}^*+{\text{\tiny \hspace{-0.5em}ex. \ref{example 4}}} && \\
 &&& \AS(X) \cup \PsReg(X) \, \ar@{}[r]|-*[@]{\subset}^*+{\text{\tiny \hspace{3em} ex. \ref{example 5}}} & \Bir(X)\\
 \Reg(X)  \, \ar@{}[r]|-*[@]{\subset}^*+{\text{\tiny ex. \ref{example 3}}} & \PsReg(X) \, \ar@{}[rru]|-*[@]{\subset}_*+{\text{\tiny ex. \ref{example 2}}} &&&
 }
\end{equation}
If $X = \p^k$, where $k\geqslant 3$, then all inclusions in the diagram \textup{\eqref{diagram}} are strict, and Examples \textup{1--5} provide explicit birational automorphisms lying in the respective complements of each inclusion. The label above each inclusion refers to the number of the corresponding example.
\end{theorem}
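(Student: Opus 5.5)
The inclusions in \eqref{diagram} follow directly from Definition~\ref{definition: main}, with one exception. A regular automorphism is in particular a pseudo-automorphism, and a projective model is in particular a model. This gives
\[
\prReg(X)\subset\prPsReg(X),\quad \prReg(X)\subset\Reg(X),\quad \prPsReg(X)\subset\PsReg(X),\quad \Reg(X)\subset\PsReg(X).
\]
The inclusions into $\AS(X)\cup\PsReg(X)$ and into $\Bir(X)$ are tautological. The exception is $\prPsReg(X)\subset\AS(X)$. Here I would show that a pseudo-automorphism $f$ of a smooth projective $Y$ is algebraically stable. Since $f$ restricts to an isomorphism $U\to V$ with complements of codimension $\geqslant 2$, the pullback of a Cartier divisor class is determined by its restriction to $V$. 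The same holds for every iterate $f^n$, which is again a pseudo-automorphism. Since $\NS(Y)=\NS(U)=\NS(V)$ via restriction, the functorial identity $(f^n)^*=(f^*)^n$ on the open pieces then gives the claim. The hypotheses that $K$ has characteristic zero and is uncountable will be used only for the existence of smooth (projective) models via resolution, and in the constructions of the examples.

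For strictness on $\p^k$ with $k\geqslant 3$, I would take each surface-type example and multiply by the identity on $\p^{k-2}$, or use a skew product, whenever the relevant invariant survives the product. Otherwise I would use genuinely $3$-dimensional examples. The examples I have in mind are as follows.
\begin{enumerate}
\item Example~\ref{example 1} (in $\Reg\setminus\prReg$): a Hénon map $h$ of $\mathbb A^2$ times the identity, which is a regular automorphism of $\mathbb A^k$. To exclude a projective regularization, I would use the invariant fibration and the fact that its dynamical degree $d$ is an integer $\geqslant 2$. For an automorphism of a smooth projective variety, $\lambda_1(f)$ and $\lambda_1(f^{-1})$ are both eigenvalues of $f^*$ on $\NS$ with product-type constraints. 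The cleanest obstruction I would try is: an automorphism of a projective model preserving the fibration would induce an automorphism of a projective surface with integer dynamical degree $d\geqslant 2$, contradicting the fact that such degrees are Salem or quadratic units.
\item Example~\ref{example 3} (in $\prPsReg\setminus\Reg$, and hence also in $\PsReg\setminus\Reg$): a Bedford--Kim type pseudo-automorphism of a blow-up of $\p^3$ at points. Non-regularizability would come either from the Lonjou--Urech $\mathrm{CAT}(0)$ criterion (no fixed vertex) or from an infinite orbit of exceptional divisors under every model.
\item Example~\ref{example 2}: an element of $\AS(\p^k)$ that is not pseudo-regularizable, such as a suitable monomial map, or a birational map whose $\lambda_1\neq\lambda_{k-1}$ in a way incompatible with a pseudo-automorphism. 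I would also want an element of $\PsReg\setminus\AS$.
\item Example~\ref{example 4}: an element of $\PsReg\setminus(\prPsReg\cup\AS)$, again built from a quasi-projective pseudo-automorphism whose degree growth is not eventually linear-recursive.
\item Example~\ref{example 5}: the map of \cite{BDJK} with transcendental $\lambda_1$, which is therefore not in $\AS(\p^3)$.
\end{enumerate}

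The main obstacle is showing that the maps in Example~\ref{example 4} and Example~\ref{example 5} are \emph{not} pseudo-regularizable. The natural approach is to prove that a pseudo-automorphism of any smooth quasi-projective model yields, after compactification, a degree sequence satisfying a linear recurrence, so that $\lambda_1$ is algebraic. This is not automatic, because the boundary can be contracted. So I would instead try the $\mathrm{CAT}(0)$ cube complex criterion: find divisorial valuations whose $f$-orbits are infinite and pairwise incompatible, so that no vertex is fixed. For this I expect to need the uncountability of $K$, choosing parameters very general so as to avoid countably many coincidences.
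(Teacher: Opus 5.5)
\paragraph{Assessment.} The inclusion part is fine. Your direct check that a pseudo-automorphism $f$ of a smooth projective variety satisfies $(f^n)^*=(f^*)^n$ is the standard argument for $\prPsReg(X)\subset\AS(X)$.

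Your Example 1 is a workable alternative to the paper's, which uses the linear-degree-growth map $(x,xy)$ plus a product lemma. If $h\times\mathrm{id}$, with $h$ a H\'enon map, were regular on a smooth projective model, it would preserve the domain of the invariant projection and hence the closure of a general fibre. After normalization and minimal resolution, $h$ would be regularized on a smooth projective surface. That is impossible, since $\lambda_1(h)=d\geqslant 2$ is not an algebraic unit.

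\paragraph{Example 2.} The gap is everything else, i.e.\ the actual content of the theorem. For $\AS\setminus\PsReg$ your candidates cannot work:
\begin{enumerate}
\item a birational monomial map is a regular automorphism of $\mathbb{G}_m^k$, so it always lies in $\Reg(\p^k)$;
\item $\lambda_1\neq\lambda_{k-1}$ is no obstruction, because $\lambda_{k-1}(f)=\lambda_1(f^{-1})$ for every birational $f$, and regularizable maps already have $\lambda_1(f)\neq\lambda_1(f^{-1})$ (e.g.\ the torus automorphism $f_A$ of Example 4, with $\lambda_1(f_A)=|\alpha|$ and $\lambda_1(f_A^{-1})=|\alpha|^2$).
\end{enumerate}
What is missing is the Cantat--D\'eserti--Xie twist $g_A=A\circ f$ with $A\in\PGL(k+1,K)$ very general. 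Countably many non-empty Zariski open conditions on $A$ (this is where uncountability of $K$ enters) make $g_A$ algebraically stable on $\p^k$. The same conditions keep every backward image of a component $E\subset\Exc(f)$ a divisor while its forward images stay contracted. So $\Exc(g_A^n)$ has unboundedly many components, and Theorem~\ref{theorem: CAT0}(1) applies.

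\paragraph{Example 4.} For $\PsReg\setminus\AS$ you have the target reversed: you call it the main obstacle to show Example 4 is \emph{not} pseudo-regularizable, but it must be pseudo-regularizable. Moreover, ``degree growth not eventually linear-recursive'' on $\p^k$ is not a birational invariant, and a transcendental $\lambda_1$ for a pseudo-regularizable map is exactly the open question posed in the paper. The paper's obstruction works with algebraic $\lambda_1$:
\begin{enumerate}
\item Take $f_A$ with $A\in\SL(3,\ZZ)$ having irreducible characteristic polynomial and a non-real eigenvalue $\alpha$ of maximal modulus.
\item The torus has no periodic divisors, so an algebraically stable model $X$ cannot contract divisors of $T$.
\item Then $\NS(X)$ is the quotient of $\langle D_1,\dots,D_k\rangle$ (the boundary divisors) by the character lattice $M\cong\ZZ^3$.
\item $g^*$ lifts to an operator $g^\#$ that is non-negative in the basis $D_i$ and acts on $M$ as $A$.
\item Hence $g^\#$ has eigenvalues $\alpha$ and $|\alpha|=\lambda_1$ of maximal modulus, and Perron--Frobenius forces $\alpha/|\alpha|$ to be a root of unity, which is false.
\end{enumerate}

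\paragraph{Example 3.} A pseudo-automorphism has no exceptional divisors, so an ``infinite orbit of exceptional divisors'' is not the relevant invariant. For a threefold pseudo-automorphism one must show that the number of components of $\Ind(f^n)$ is unbounded (Theorem~\ref{theorem: CAT0}(2)), and you give neither a concrete map nor a mechanism. The paper takes Blanc's $\sigma_{p_3}\circ\sigma_{p_2}\circ\sigma_{p_1}$ on a very general cubic surface. A line $L$ through $p_1$ and a point of $\Gamma_{p_1}\cap\Gamma_{p_2}$ lies in $\Ind(f^m)$ for all $m\geqslant N$. Its backward images are pairwise distinct curves that successively enter $\Ind(f^m)$ as $m$ grows.

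\paragraph{Example 5.} You name the right criterion but produce no divisor. The paper uses $H=\{x_0-x_1+x_2-x_3=0\}$ and the tropicalization $\tau$ over $\QQ(t)$. This map intertwines $h_A$ with $A$, and $g$ preserves it at points where $\min\{0,\tau_1,\tau_2,\tau_3\}$ is attained once. The point $p=(1:t:1:2-t)\in H$ has $\tau(f^n(p))=A^ne_1$, with pairwise distinct non-zero coordinates, so $H$ is never contracted forward. Meanwhile the sign pattern of $A^{-n}(1,1,0)^t$ forces $f^{-n}(H)=(0:0:1:0)$ for all $n\geqslant 1$.

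\paragraph{Dimension $k\geqslant 4$.} ``Genuinely three-dimensional'' examples only settle $k=3$. Non-membership in $\Reg$ and $\PsReg$ passes to $f\times\mathrm{id}_{\p^{k-3}}$ by counting components of $\Exc$ and $\Ind$, and $\lambda_1$ is unchanged; this handles Examples 3 and 5. But non-membership in $\AS$ is not known to survive a product with the identity. So for Example 4 the paper uses $f_A\times h$, with $h$ a diagonal automorphism of $\mathbb{G}_m^{k-3}$ with multiplicatively independent weights. This map still has no periodic divisors, so the same Perron--Frobenius argument applies.
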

While establishing that these classes fit into the diagram is straightforward, the primary contribution of the theorem is the strictness of all inclusions when $X$ is a projective space. Below, we provide the examples that prove this strictness.

\begin{example}\label{example 1}
Consider the algebraic torus $\mathbb{G}_m^2$ with coordinates $(x,y)$. Let $f\in \Aut(\mathbb{G}_m^2)$ be the regular automorphism given by $f(x,y) = (x,xy)$. It is easy to see that $f^n(x,y) = (x,x^n y)$. Thus, $f$ induces a birational automorphism of $\p^2$ whose degree growth is linear. Therefore, by \cite[Theorem 0.2]{Diller_Favre}, the birational automorphism of $\p^2$ induced by $f$ is not projectively regularizable. By Lemmas \ref{lemma: if f lies in S} and \ref{lemma: if f does not lie in S}, the birational automorphism $f\times \mathrm{id}_{\p^{k-2}}\in \Bir(\p^{k})$ remains regularizable but not projectively regularizable for all $k\geqslant 2$.
\end{example}
\begin{example}\label{example 2}
The following example was originally introduced in \cite{CDX}. Let $f\in \Bir(\p^k)$ be a birational automorphism of degree $d\geqslant 2$. If $A\in \PGL(k+1,\KK)$ is a very general element and $k\geqslant 2$, then the composition $g_A = A\circ f \in \Bir(\p^k)$ is not pseudo-regularizable by Theorem \ref{theorem: CDX is non-regularizable}.
\end{example}
\begin{example}\label{example 3}
Let $Q\subset \p^3$ be a smooth cubic surface. To each point $p\in Q$, one can associate a birational involution $\sigma_p$ of $\p^3$ (see Section \ref{section: pr ps-reg non-reg}). For points $p_1, p_2, p_3 \in Q$, the composition $f = \sigma_{p_3}\circ \sigma_{p_2}\circ\sigma_{p_1}$ is projectively pseudo-regularizable by \cite[Theorem 1.2]{Blanc_Pseudo}. However, if $Q$ is a very general cubic surface and if $p_1,p_2,p_3$ are sufficiently general points on $Q$, then $f$ is not regularizable by Theorem \ref{theorem: pr ps-reg non-reg}. By Lemmas \ref{lemma: if f lies in S} and \ref{lemma: if f does not lie in S}, the product $f\times \mathrm{id}_{\p^{k-3}}\in \Bir(\p^k)$ remains projectively pseudo-regularizable but not regularizable for all $k\geqslant 3$.
\end{example}
\begin{example}\label{example 4}
The group $\SL(3,\ZZ)$ acts regularly on the $3$-dimensional algebraic torus $\mathbb{G}_m^3$; that is, to each matrix $A\in \SL(3,\ZZ)$ one can associate a monomial automorphism $f_A$. Suppose $A\in \SL(3,\ZZ)$ is a matrix whose characteristic polynomial is irreducible over $\QQ$, with a real root $\beta$ and a pair of complex conjugate roots $\alpha, \overline{\alpha}$ such that $|\beta|<1<|\alpha|$. By Theorem \ref{theorem: ps-reg and non-AS}, the associated automorphism $f_A \in \Aut(\mathbb{G}_m^3)$ induces a regularizable birational automorphism of $\p^3$ with no algebraically stable model. Moreover, by Corollary \ref{corollary:  ps-reg and non-AS in any dimension}, for any $k\geqslant 3$ there exists $h\in \Aut(\mathbb{G}_m^{k-3})$ such that $f_A\times h\in\Aut(\mathbb{G}_m^k)$ admits no algebraically stable model.
\end{example}
\begin{example}\label{example 5}
The first dynamical degree of a birational automorphism can be a transcendental number; an example of such a map $f\in\Bir(\p^3)$ was constructed in \cite{BDJK}. By Lemma \ref{lemma: AS implies lambda is algebraic}, this birational automorphism does not admit an algebraically stable model. Furthermore, Theorem \ref{theorem: non-ps-reg and non AS} implies that $f$ is not pseudo-regularizable. Moreover, since $\lambda_1(f\times \mathrm{id}_{\p^{k-3}}) = \lambda_1(f)$, we deduce from Lemmas \ref{lemma: AS implies lambda is algebraic} and \ref{lemma: if f does not lie in S} that for all $k\geqslant 3$, the product birational automorphism $f\times \mathrm{id}_{\p^{k-3}}\in \Bir(\p^k)$ neither admits an algebraically stable model nor is pseudo-regularizable.
\end{example}
As explained above, all the classes of birational automorphisms discussed here arise naturally in the study of dynamical degrees. An important question is under what conditions the first dynamical degree $\lambda_1(f)$ is an algebraic number. As previously mentioned, this holds whenever $f$ admits an algebraically stable model. A very interesting recent result \cite[Theorem C]{AX} establishes that the first dynamical degree of a regular automorphism $f$ of an affine variety is an algebraic integer provided that $\lambda_1(f)^2>\lambda_2(f)$. Moreover, according to Theorem \ref{theorem: non-ps-reg and non AS}, the only known example of a birational automorphism with a transcendental dynamical degree is not pseudo-regularizable. It is therefore natural to ask the following question.

\begin{question}
  Let $f\in \Bir(X)$ be regularizable or pseudo-regularizable. Can $\lambda_1(f)$ be a transcendental number?
\end{question}
In Examples \ref{example 1}, \ref{example 3}, and \ref{example 5}, we first construct an automorphism $f\in \Bir(X)$ with the desired properties on~$X = \p^2$ or $\p^3$. To generalize these examples to arbitrary higher dimensions, we investigate the product map $f\times \mathrm{id}_{Y}$ on the variety $X\times Y$ and show that it shares the same properties as $f$. Indeed, it is easy to see that if $f$ belongs to one of the classes $\prReg(X)$, $\Reg(X)$, $\prPsReg(X)$, $\PsReg(X)$, or~$\AS(X)$, then~$f\times \mathrm{id}_Y$ belongs to the same class (see Lemma \ref{lemma: if f lies in S}). A more challenging question is whether $f\times \mathrm{id}_Y$ inherits the property of \emph{not} belonging to one of these classes. In Lemma \ref{lemma: if f does not lie in S}, we show that if $f$ does not lie in the classes~$\prReg$, $\Reg$, or $\PsReg$, then neither does $f\times \mathrm{id}_Y$. However, the following question remains open.
\begin{question}
 Let $X$ and $Y$ be algebraic varieties and let $f\in \Bir(X)$ be a birational automorphism of~$X$. Assume that $f\times \mathrm{id}_Y\in \Bir(X\times Y)$ is projectively pseudo-regularizable or admits an algebraically stable model. Is $f$ itself  necessarily projectively pseudo-regularizable in the first case, or does $f$ necessarily admit an algebraically stable model in the second?
\end{question}

\noindent{\bf Structure of the paper.} In the Section \ref{section: preliminaries}, we recall the definitions and basic properties of the first dynamical degree and the birational automorphism which are (pseudo)-regularizable or algebraically stable. In Sections~\mbox{\ref{section: example 2}, \ref{section: example 3}, \ref{section: example 4}} and \ref{section: example 5}, we prove theorems explaining Examples \ref{example 2}, \ref{example 3}, \ref{example 4} and \ref{example 5}, respectively.

\noindent{\bf AI disclosure.} The ideas of Example \ref{example 4} and the proof of Theorem \ref{theorem: non-ps-reg and non AS} were suggested by ChatGPT 6 Astra. An LLM was also used for proofreading of the text. The author independently checked every argument, wrote the final text, and takes full responsibility for its content.

\section{Preliminaries}\label{section: preliminaries}
\subsection{Definitions} Let $f\colon X\dashrightarrow Y$ be a birational map between smooth quasi-projective varieties $X$ and $Y$. Consider the greatest Zariski open subset $U\subset X$ such that $f|_{U}$ is a regular isomorphism to its image. Then { \bf the exceptional locus $\Exc(f)$} equals the union of all divisors of $X$ lying in the complement $X\setminus U$. Consider the greatest Zariski open subset $V\subset X$ such that $f|_{V}$ is a regular morphism. Then {\bf the indeterminacy locus $\Ind(f)$}  is the complement $X\setminus V$.

Note that if $Y$ is not projective or if $X$ is not normal then $\Ind(f)$ can contain divisors. However, if $f\in \Bir(X)$ is a birational automorphism of a smooth projective variety $X$ or if $f$ is a pseudo-automorphism of smooth quasi-projective $X$ then $\Ind(f)$ has codimension at least 2 in $X$.

Consider a birational automorphism $f\in \Bir(X)$ of a smooth projective variety $X$ and fix a norm $\|-\|$ on the vector space $\NS_{\RR}(X) = \NS(X)\otimes \RR$. Then the inverse image operator $f^*\colon \NS(X)\to \NS(X)$ is a bounded operator and one can compute its norm. We define the {\bf first dynamical degree of $f$} in the following way:
\[
 \lambda_1(f) = \lim_{n\to \infty} \sqrt[n]{\|(f^n)^*\|}
\]
By \cite[Theorem 1.1]{TTT_dynamical_degrees} the number $\lambda_1(f)$ does not depend on the choice of the norm $\|-\|$ on $\NS(X)$. Moreover, the dynamical degree is a birational invariant of $f\in \Bir(X)$. Thus, one can define the first dynamical degree of an automorphism of a quasi-projective variety $U$ as the dynamical degree of a birational automorphism induced on any smooth projective compactification of $U$.

\subsection{Criteria} The first dynamical degree of an algebraically stable map is very easy to compute since by the definition of the algebraic stability one has $\|(f^n)^*\| = \|(f^*)^n\| = \|f^*\|^n$; hence $\lambda_1(f)$ is the spectral radius of $f^*$. This implies the following lemma.
\begin{lemma}\label{lemma: AS implies lambda is algebraic}
 If $f\in \Bir(X)$ admits an algebraically stable model then $\lambda_1(f)$ is an algebraic integer.
\end{lemma}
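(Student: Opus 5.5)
Since $\lambda_1$ is a birational invariant of the pair $(X,f)$, we may replace $X$ by an algebraically stable model. So we assume from the start that $X$ is smooth projective and $f\in\Bir(X)$ is algebraically stable, that is, $(f^n)^*=(f^*)^n$ on $\NS(X)$ for all $n\geqslant 1$.

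First, we check that $f^*$ is an endomorphism of a finitely generated abelian group. For a birational map of a smooth projective variety, $f^*$ is defined by pulling back divisors through a resolution of $f$, or equivalently by restricting to the complement of $\Ind(f)$, which has codimension at least $2$, and then taking the closure. This operation sends Cartier divisors to Cartier divisors. It also preserves algebraic equivalence, because pulling back to a resolution and pushing forward to $X$ both respect it. Hence $f^*$ is a $\ZZ$-linear map of $\NS(X)$. Passing to $\NS(X)/\mathrm{tors}\cong\ZZ^\rho$, it is given by an integer matrix $M$. Its extension to $\NS_\RR(X)$ is the linear map used in the definition of $\lambda_1$. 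The torsion part does not affect the norm on $\NS_\RR(X)$.

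Second, by algebraic stability we have $\|(f^n)^*\|=\|M^n\|$. Gelfand's formula then gives
\[
\lambda_1(f)=\lim_{n\to\infty}\|M^n\|^{1/n}=\rho(M),
\]
the spectral radius of $M$. We do not use the identity $\|(f^*)^n\|=\|f^*\|^n$, which is false in general.

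Third, the eigenvalues of $M$ are roots of its characteristic polynomial. That polynomial is monic with integer coefficients, so every eigenvalue is an algebraic integer. Thus $\rho(M)=|\mu|$ for some algebraic integer $\mu$, and we need $|\mu|$ itself to be an algebraic integer. Note that $|\mu|^2=\mu\bar\mu$, and $\bar\mu$ is also a root of the same integer polynomial. Algebraic integers form a ring, so $|\mu|^2$ is an algebraic integer. Then $|\mu|$ is a root of $x^2-|\mu|^2$, a monic polynomial whose coefficients are algebraic integers, so $|\mu|$ is an algebraic integer as well.

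Alternatively, $M$ preserves the nef cone, or at least the pseudo-effective cone, which is a salient full-dimensional closed convex cone. By the Perron--Frobenius theorem for cones, $\rho(M)$ is then itself a real eigenvalue of $M$.

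The only point that needs care is the first step: that $f^*$ is really defined over $\ZZ$ on $\NS(X)$, and that the norm-growth definition of $\lambda_1$ coincides with the spectral radius once algebraic stability holds. Everything after that is standard linear algebra.
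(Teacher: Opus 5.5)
Your proof is correct and follows the paper's route: algebraic stability makes $\lambda_1(f)$ the spectral radius of the integer matrix of $f^*$ on $\NS(X)$, and that spectral radius is an algebraic integer. Your use of Gelfand's formula (in place of the paper's identity $\|(f^*)^n\|=\|f^*\|^n$, which in general holds only as an inequality) and your $|\mu|^2=\mu\bar\mu$ step supply details the paper's one-line argument glosses over; in your alternative remark, rely only on the pseudo-effective cone, since in dimension $\geqslant 3$ pullback by a birational map need not preserve nef classes.
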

A geometric criterion for algebraically stable birational automorphisms is provided in the next assertion.
\begin{lemma}\label{lemma: criterion for AS}
 Let $f\in \Bir(X)$ be a birational automorphism which is algebraically stable on a smooth projective $X$. Then the proper image of a divisor $E$ under $f^n$ is not contained in $\Ind(f)$ for all $n\geqslant 1$.
\end{lemma}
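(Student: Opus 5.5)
We prove the contrapositive: if there are a prime divisor $E$ and an integer $n\geqslant 1$ such that the proper image $f^n(E)$ is contained in $\Ind(f)$, then $(f^{n+1})^*\neq (f^*)^{n+1}$ on $\NS(X)$. (If $f^n(E)$ is itself a divisor, it cannot lie in $\Ind(f)$, which has codimension at least $2$. So the interesting case is $E\subset \Exc(f^n)$, with its image a subvariety of codimension $\geqslant 2$.) The tool is the pushforward $f_*$, the transpose of $f^*$ under the intersection pairing of $\NS(X)$ with curve classes. Equivalently we work with $f_*$ on divisors, computed on a resolution of the graph. Since $(f^n)^*=(f^*)^n$ for all $n$ is equivalent to the dual statement for pushforwards, it suffices to show $(f^{n+1})_*\neq f_*\circ (f^n)_*$ on some class.

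We may choose $n$ minimal, i.e.\ the first time some strict transform of $E$ falls into $\Ind(f)$. Then we reduce to a two-step statement. Set $g=f^n$. By minimality, no divisor contracted at an earlier stage lands in $\Ind(f)$, so standard composition results for pullbacks give $(f^n)^*=(f^*)^n$, i.e.\ $g^*=(f^*)^n$. It then suffices to show $(f\circ g)^*\neq g^*f^*$.

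The key computation uses an ample class $H$. For any birational $\phi$ and ample $H$, $\phi^*H$ is the class of the strict transform of a general member of $|H|$. Take $D\in|H|$ general and form $g^*f^*H$ as the pullback by $g$ of the divisor $f^*D$ (strict transform of $D$ under $f^{-1}$). Every member of the linear system $f^*|H|$ contains the base locus, so $f^*D$ contains $\Ind(f)$. In particular it contains $g(E)$, and hence its total pullback by $g$ acquires $E$ with positive multiplicity. By contrast, $(f\circ g)^*D$ is the strict transform of a general $D$, which does not contain $E$. Comparing on a common resolution $\Gamma$ of the graphs of $g$ and $f$, with projections $\pi_1,\pi_2$, gives
\[ g^*f^*H-(f\circ g)^*H = \sum_F a_F [F], \qquad a_F\geqslant 0, \quad a_E>0, \]
summing over $g$-exceptional divisors $F$.

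The main obstacle is showing this effective, nonzero divisor class is nonzero in $\NS(X)$. For this, intersect it with a movable curve class $C^{k-1}$ ($C$ ample, $k=\dim X$). A nonzero effective divisor has positive degree against $C^{k-1}$, so the difference is nonzero in $\NS$ and algebraic stability fails. Care is needed in justifying that the multiplicity of $\pi_2^*(f^*D)$ along the strict transform of $E$ is positive; I would check this on $\Gamma$ via $\pi_1(\pi_2^{-1}(\Ind f))$.
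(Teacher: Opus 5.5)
Your proof is correct. It is the classical Diller--Favre positivity argument \cite{Diller_Favre} carried out in arbitrary dimension, and it differs from the paper's proof in what it establishes. The paper quotes \cite[Lemma 2.3]{Kuznetsova_Regularizations}: the discrepancy between $(f^n)^*$ and the composed pullbacks is represented by a divisor supported on the divisors sent into $\Ind(f)$. It then concludes by induction on $n$. Your computation recovers that support statement, since for general $D$ the coefficient $a_F$ vanishes unless $g(F)\subset\Ind(f)$. It also adds the sign: for $D$ general in a very ample system, $g^*(f^*D)-(f\circ g)^*D$ is effective with positive coefficient along $E$. Hence it has positive degree against $C^{k-1}$ and is nonzero in $\NS(X)$. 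That nonvanishing is exactly what the implication as stated needs. A support statement alone most naturally gives the converse: if no divisor ever falls into $\Ind(f)$, the discrepancy lives in codimension $\geqslant 2$ and so vanishes. That converse is the direction the paper actually invokes for $\prPsReg(X)\subset\AS(X)$ and for the maps $g_A$. Your route is a few lines longer, but it is self-contained and proves precisely the stated direction.

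Some tidying is needed.
\begin{itemize}
\item Take $H$ very ample, not merely ample. Then $|H|$ is base-point-free, and $\Ind(f)\subset\mathrm{Bs}(f^*|H|)$ because $f^*|H|$ defines $f$ followed by the embedding given by $|H|$.
\item Minimality of $n$ would have to be taken over all prime divisors, not just $E$. It is simpler to drop it: if $(f^n)^*\neq(f^*)^n$ you are already done, and otherwise $(f^*)^{n+1}=(f^n)^*f^*$.
\item The $C^{k-1}$ step needs $a_F\geqslant 0$ for every $F$, so give the one-line proof. If $g$ does not contract $F$, then $g$ is a local isomorphism at the generic point of $F$ and $f$ is defined at the generic point of $g(F)$, so the two coefficients agree. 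If $g$ contracts $F$, the coefficient of $F$ in $(f\circ g)^*D$ is $0$ for general $D$, while its coefficient in $g^*(f^*D)$ is non-negative.
\item The positivity of the multiplicity along $E$, which you flag as delicate, is immediate. The strict transform of $E$ on the graph of $g$ maps onto $g(E)\subset\mathrm{Supp}(f^*D)$.
\item Delete the opening discussion of $f_*$. The transpose of $f^*$ is pushforward on curve classes, whereas pushforward on divisors is $(f^{-1})^*$, so the claimed ``equivalently'' is false. Nothing later in your argument uses it.
\end{itemize}
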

\begin{proof}
 By \cite[Lemma 2.3]{Kuznetsova_Regularizations} for any divisor class $D\in \NS(X)$ the difference $f^*((f^{n-1})^* D - (f^{n})^*D$ can be represented by the class of a divisor $E$ supported in $f^{1-n}(\Ind(f))$. Thus, by induction the proof is complete.
\end{proof}
Lemma \ref{lemma: criterion for AS} immediately implies the following assertion.
\begin{corollary}
 For any variety $X$ one has $\prPsReg(X)\subset \AS(X)$.
\end{corollary}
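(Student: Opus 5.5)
Given $f\in\prPsReg(X)$, we choose a smooth projective birational model $Y$ of $X$ on which $f$ induces a pseudo-automorphism $f_Y$. Since $\AS(X)$ is defined by the existence of a smooth projective model on which the induced map is algebraically stable, it is enough to show that $f_Y$ is algebraically stable on $Y$. We do not try to argue through the converse of Lemma~\ref{lemma: criterion for AS}, which as stated only gives a necessary condition. Instead we show directly that $(f_Y^n)^*=(f_Y^*)^n$ on $\NS(Y)$, using the identification $\NS$ of divisor classes modulo codimension-two loci.

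First, let $U,V\subset Y$ be open subsets with complements of codimension at least $2$ such that $f_Y\colon U\to V$ is an isomorphism. Because $Y$ is smooth, restriction gives isomorphisms $\operatorname{Pic}(Y)\cong\operatorname{Pic}(U)$ and $\operatorname{Pic}(Y)\cong\operatorname{Pic}(V)$. We take as given the standard fact that, for a birational map of smooth projective varieties, $f_Y^*D$ is the closure of the pullback of $D$ on the domain of definition. In the present situation this pullback is $(f_Y|_U)^*(D|_V)$, extended to $Y$. Hence $f_Y^*$ is the composition of these isomorphisms; in particular it is invertible, with inverse $(f_Y^{-1})^*$. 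The same holds for every iterate, because $f_Y^n$ is again a pseudo-automorphism.

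The key step is to show that pseudo-automorphisms are closed under composition in a way that is compatible with pullback. The naive domain $U\cap f_Y^{-1}(U)\cap\dots$ still has complement of codimension at least $2$, since $f_Y|_U$ is an isomorphism and therefore preserves codimension. On this open set $W_n$, the map $f_Y^n$ is the honest composition of isomorphisms. Pullback of line bundles is functorial on $W_n$, and $\operatorname{Pic}(Y)\to\operatorname{Pic}(W_n)$ is an isomorphism. Together these give $(f_Y^n)^*=(f_Y^*)^n$ on $\operatorname{Pic}(Y)$, and hence on $\NS(Y)$.

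Alternatively, one can argue in the paper's language through Lemma~\ref{lemma: criterion for AS} and \cite[Lemma 2.3]{Kuznetsova_Regularizations}. The defect $f^*(f^{n-1})^*D-(f^n)^*D$ is supported on divisors mapped into $\Ind(f)$. A pseudo-automorphism contracts no divisors, so the proper image of any divisor is again a divisor, and a divisor cannot lie in $\Ind(f_Y)$, which has codimension at least $2$. The defect therefore vanishes.

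The main obstacle is the bookkeeping showing that the complement of $W_n$ has codimension at least $2$, and that the pullback defined via closures agrees with the restriction description. Both points are routine once one uses that $f_Y|_U$ is an isomorphism onto $V$.
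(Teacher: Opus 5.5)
Your argument is correct, and your main route differs from the paper's.

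The paper gets the corollary in one line from Lemma~\ref{lemma: criterion for AS}, or more precisely from its proof. By \cite[Lemma 2.3]{Kuznetsova_Regularizations}, the defect $f^*(f^{n-1})^*D-(f^n)^*D$ is supported in $f^{1-n}(\Ind(f))$. For a pseudo-automorphism, no iterate sends a divisor into $\Ind(f)$, since that set has codimension at least $2$. This is exactly your ``alternatively'' paragraph. You are right that the lemma as stated only gives the necessary direction, whereas the corollary needs the sufficient direction, which is what its proof actually establishes.

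Your primary argument avoids this. You work on the open set $W_n$ on which $f_Y^n$ is an honest composition of isomorphisms, and you check that its complement has codimension at least $2$. You then use that a divisor on the smooth variety $Y$ is determined by its restriction to such a set. By induction, using $f_Y(W_n)\subset W_{n-1}$, the divisors $(f_Y^*)^nD$ and $(f_Y^n)^*D$ agree on $W_n$, hence on all of $Y$.

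What each route buys:
\begin{itemize}
\item Your route is self-contained and needs no external lemma. It also gives a bit more: equality already at the level of divisors and of $\operatorname{Pic}(Y)$, and invertibility of $f_Y^*$ with inverse $(f_Y^{-1})^*$.
\item The paper's route is an instance of a general sufficient criterion for algebraic stability: no divisor is sent into $\Ind(f)$ by an iterate. The paper needs that criterion anyway for maps that are not pseudo-automorphisms, such as $g_A$ in Theorem~\ref{theorem: CDX is non-regularizable} and the products in Lemma~\ref{lemma: if f lies in S}. Your argument is tailored to pseudo-automorphisms and does not give it.
\end{itemize}
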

We will need the criterion for pseudo-regularizable birational automorphisms. The most efficient way to prove that an automorphism has no pseudo-regular model was provided in \cite{Lonjou_Urech_CAT0}. We are going to use the following corollary from the result in this paper:
\begin{theorem}[{\cite[Proposition 4.16]{Lonjou_Urech_CAT0}}]\label{theorem: CAT0}
 Let $f\in \Bir(X)$ be a birational automorphism of a smooth quasi-projective variety $X$.
 \begin{enumerate}
  \item[\textup{(1)}] The number of components of the exceptional loci $\Exc(f^n)$ is bounded by a constant which does not depend on $n$ if and only if $f$ is pseudo-regularizable. In particular, if there exists an irreducible divisor $E\subset X$ such that $f^n(E)$ is a divisor for all $n\leqslant -1$ and $\mathrm{codim}_X(f^n(E))\geqslant 2$ for all $n\geqslant 1$ then $f$ is not pseudo-regularizable.
  \item[\textup{(2)}] Assume that $X$ is a threefold and that $f$ is a pseudo-automorphism of $X$. Then the number of components of the indeterminacy loci $\Ind(f^n)$ is bounded by a constant which does not depend on $n$ if and only if $f$ is regularizable.
 \end{enumerate}
\end{theorem}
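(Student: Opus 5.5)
Since this statement is quoted from \cite[Proposition 4.16]{Lonjou_Urech_CAT0}, the plan is to recall the mechanism of that paper rather than reprove the cube complex machinery, and then to check the ``in particular'' clause of (1) directly.

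\emph{Necessity in (1).} Suppose $\varphi\colon X\dashrightarrow Y$ is birational and $g=\varphi\circ f\circ\varphi^{-1}$ is a pseudo-automorphism of $Y$. Then $f^n=\varphi^{-1}\circ g^n\circ\varphi$. A divisor contracted by $f^n$ is either contracted by $\varphi$, or it is sent by $\varphi$ to a divisor $D\subset Y$. In the second case $g^n(D)$ is again a divisor, since $g^n$ is a pseudo-automorphism, so $g^n(D)$ must be contracted by $\varphi^{-1}$. Hence the number of components of $\Exc(f^n)$ is at most $\#\Exc(\varphi)+\#\Exc(\varphi^{-1})$, which is independent of $n$.

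\emph{Sufficiency in (1).} This is the part that uses the geometry of \cite{Lonjou_Urech_CAT0}. One takes the $\mathrm{CAT}(0)$ cube complex whose vertices are classes of pairs (birational model, open subset), with edges given by adding or removing a single divisor. $\Bir(X)$ acts on it by isometries. The combinatorial distance between the base vertex $v_X$ and $f^n\cdot v_X$ is controlled by the number of divisors contracted by $f^n$ and by $f^{-n}$. Bounded $\#\Exc(f^n)$ therefore gives a bounded orbit. By the Bruhat--Tits fixed point theorem for complete $\mathrm{CAT}(0)$ spaces, $f$ then fixes a point, hence fixes a cube. One must then pass from the fixed cube to a fixed vertex, for instance via the barycentric subdivision or the fact that the action is without inversions after passing to $f^2$ (and arguing back to $f$). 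A vertex fixed by $f$ is a model with an open set on which $f$ induces an isomorphism up to codimension $2$, i.e.\ a pseudo-regularization. I expect this step to be the main obstacle, if one wanted to redo it: one needs the distance estimate and a fixed vertex rather than just a fixed point.

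\emph{The particular case of (1).} Let $E$ satisfy the hypotheses. For $1\leqslant m\leqslant n-1$, the divisor $f^{-m}(E)$ is mapped by $f^{n}$ onto $f^{n-m}(E)$, which has codimension at least $2$ because $n-m\geqslant 1$. So $E,f^{-1}(E),\dots,f^{-(n-1)}(E)$ all lie in $\Exc(f^n)$. These divisors are pairwise distinct. Indeed, if $f^{-a}(E)=f^{-b}(E)$ with $a<b$, then applying $f^{b}$ gives that $f^{b-a}(E)=E$ is a divisor, contradicting $\mathrm{codim}\, f^{b-a}(E)\geqslant 2$. Hence $\#\Exc(f^n)\geqslant n$ is unbounded, and $f\notin\PsReg(X)$ by the first part.

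\emph{Part (2).} The argument is the same, using the second complex of \cite{Lonjou_Urech_CAT0}, whose edges correspond to blow-ups of smooth centres. Since $f$ is already a pseudo-automorphism of the threefold $X$, the obstruction to regularity lies only in the indeterminacy curves. Bounded $\#\Ind(f^n)$ gives a bounded orbit and hence a fixed vertex, which is a regularization. Conversely, a regularization $Y$ bounds the indeterminacy curves of $f^n$ by those of the fixed birational map $X\dashrightarrow Y$ and its inverse.
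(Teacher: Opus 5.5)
Your overall route matches the paper's. The paper gives no argument for this statement beyond citing \cite[Proposition 4.16]{Lonjou_Urech_CAT0}, and you also take the hard implications from there. What you prove directly in (1) is correct. The assignment $E\mapsto g^n(\varphi(E))$, applied to the components of $\Exc(f^n)$ not contracted by $\varphi$, is injective with values in $\Exc(\varphi^{-1})$, so $\#\Exc(f^n)\leqslant\#\Exc(\varphi)+\#\Exc(\varphi^{-1})$. Your proof of the particular case is complete: it uses the $n$ pairwise distinct divisors $E,f^{-1}(E),\dots,f^{-(n-1)}(E)$ inside $\Exc(f^n)$ and needs only this elementary implication. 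That is worth recording, because it shows that the non-pseudo-regularizability claims in Theorems \ref{theorem: CDX is non-regularizable} and \ref{theorem: non-ps-reg and non AS} do not depend on the cube complex machinery.

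There is, however, a genuine gap in the converse you claim for (2). This is exactly the implication used in the proof of Theorem \ref{theorem: pr ps-reg non-reg}, and the divisorial count does not transfer to curves.

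From $f^n=\varphi^{-1}\circ g^n\circ\varphi$ with $g\in\Aut(Y)$, you only get the following: a curve $C\subset\Ind(f^n)$ either lies in $\Ind(\varphi)$, or $g^n\circ\varphi$ sends its generic point into $\Ind(\varphi^{-1})$. Now suppose $\varphi$ contracts a divisor $D$. Then $f^n(D)$ is again contracted by $\varphi$, so $g^n(\varphi(D))=\varphi(f^n(D))$ lies in $\Ind(\varphi^{-1})$. If instead $D$ is sent outside $Y$, then $D\subset\Ind(\varphi)$. Either way, every curve $C\subset D$ satisfies your necessary condition for every $n$, so the condition bounds nothing.

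The underlying problem is that the set of curves along which $\varphi$ fails to be a local isomorphism can be infinite. So, unlike the divisorial case, there is no injection into a fixed finite set. A repair needs an extra step controlling the indeterminacy curves of $f^n$ that lie on the finitely many, $f$-permuted divisors contracted by $\varphi$. One option is to first modify $(Y,g)$ equivariantly so that the conjugacy becomes a pseudo-isomorphism between invariant open subsets; after that your count works. Otherwise this direction too must be quoted from \cite{Lonjou_Urech_CAT0}, as the paper does.

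Two smaller remarks on the parts you defer:
\begin{itemize}
\item Neither of your proposed routes from a fixed cube to a fixed vertex works. Barycentric subdivision gives a fixed barycentre, which is not a model. Passing to $f^2$ does not help either, since a pseudo-regularization of $f^2$ does not a priori give one of $f$. The actual reason is that the action preserves the orientation of edges given by inclusion of open subsets. Hence an element stabilising a cube fixes its distinguished vertex, for example the one from which no hypersurface has been removed.
\item The complex behind (2) is, to my understanding, not a complex of blow-ups of smooth centres. It is the codimension-two analogue of the first one: models pseudo-isomorphic to $X$, with edges given by removing an irreducible curve. Only pseudo-automorphisms act on it, which is why (2) assumes $f$ is one.
\end{itemize}
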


\subsection{Birational automorphisms of products} When constructing a birational automorphism that satisfies one property but not another, we sometimes give a concrete construction in a specific dimension. However, we needs examples in all dimensions. Thus, we study direct products of birational automorphisms and deduce that the properties we are interested in persist under products. For instance, the following assertion holds.
\begin{lemma}\label{lemma: if f lies in S}
 Let $f\in \Bir(X)$ and $g\in \Bir(Y)$ be birational automorphisms such that $f\in S(X)$ and $g\in S(Y)$ where $S = \prReg,\prPsReg,\Reg,\PsReg$, or $\AS$. Then  the birational automorphism $f\times g$ lies in $S(X\times Y)$.
\end{lemma}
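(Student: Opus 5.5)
The plan is to take the models furnished by the hypotheses, form their product, and check each class $S$ separately; only $\AS$ needs an actual computation. So for $S\in\{\prReg,\Reg,\prPsReg,\PsReg\}$, let $X'$ and $Y'$ be smooth birational models of $X$ and $Y$ (projective when $S$ is one of the projective classes) on which $f$ and $g$ induce regular automorphisms, resp.\ pseudo-automorphisms. Then $X'\times Y'$ is a smooth birational model of $X\times Y$, and it is projective whenever both factors are.

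In the regular cases nothing further is needed: $f\times g$ is a product of regular automorphisms, hence a regular automorphism of $X'\times Y'$. In the pseudo-regular cases, take open sets $U_1\to V_1$ in $X'$ and $U_2\to V_2$ in $Y'$ on which $f$ and $g$ are isomorphisms, with complements of codimension at least $2$. Then $f\times g$ restricts to an isomorphism $U_1\times U_2\to V_1\times V_2$. Moreover
\[
(X'\times Y')\setminus(U_1\times U_2)=\bigl((X'\setminus U_1)\times Y'\bigr)\cup\bigl(X'\times(Y'\setminus U_2)\bigr),
\]
and each piece has codimension at least $2$. Hence $f\times g$ is a pseudo-automorphism of $X'\times Y'$.

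For $S=\AS$, take smooth projective models $X'$, $Y'$ on which $f$ and $g$ are algebraically stable, and use the geometric criterion. Stability of $f$ is equivalent to the condition that no divisor $E\subset X'$ has $f^n(E)\subset\Ind(f)$ for some $n\geqslant1$ (the converse of Lemma~\ref{lemma: criterion for AS}, which is standard and is the Fornaess--Sibony / Diller--Favre criterion). Then I would observe three things.
\begin{enumerate}
\item $\Ind(f\times g)=\Ind(f)\times Y'\cup X'\times\Ind(g)$.
\item $(f\times g)^n=f^n\times g^n$.
\item Every prime divisor $D$ of $X'\times Y'$ either dominates both factors or is a pullback $E\times Y'$ or $X'\times F$ of a divisor from one factor.
\end{enumerate}
Suppose the proper image of $D$ under $(f\times g)^n$ lies in $\Ind(f)\times Y'$. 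Its projection to $X'$ must then land in $\Ind(f)$. If $D$ dominates $X'$ this is impossible, because $f^n$ is dominant. If $D=E\times Y'$, it contradicts the stability of $f$. Otherwise $D$ projects to a proper closed subset of $X'$, and (3) forces $D=X'\times F$ with $F$ a divisor of $Y'$; its image then projects onto $X'$, which again is impossible. The case $X'\times\Ind(g)$ is symmetric.

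The main obstacle is exactly this $\AS$ case. First, the converse criterion has to be quoted precisely. Second, one needs the fact that proper images of divisors under the product map behave as described in (3), in particular that the proper image of a divisor of $X'\times Y'$ surjecting onto $X'$ still surjects onto $X'$. As a fallback I would argue directly on $\NS(X'\times Y')$: it contains $p_1^*\NS(X')\oplus p_2^*\NS(Y')$, and one checks that $(f\times g)^*$ is compatible with pullback of divisors on open sets whose complements have codimension at least $2$. The Künneth-type extra classes make this fallback messier, which is why I prefer the geometric criterion.
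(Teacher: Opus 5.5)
Your proof is correct and follows the paper's route. The four (pseudo-)regularizable classes are handled by taking the product of the given models. For $\AS$ you use $\Ind(f\times g)=\Ind(f)\times Y'\cup X'\times\Ind(g)$ together with the divisor criterion; the paper relies on the converse direction of that criterion implicitly, since Lemma~\ref{lemma: criterion for AS} is only stated one way, so you are right to flag it.

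One harmless slip: in your ``Otherwise'' branch, (3) forces a divisor that projects to a proper closed subset of $X'$ to be $E\times Y'$, not $X'\times F$, which dominates $X'$. That branch is vacuous anyway, because your first two cases already exhaust (3).
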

\begin{proof}
 If  $S = \prReg,\prPsReg,\Reg$, or $\PsReg$ then the result is obvious. Now assume that $f$ and $g$ are algebraically stable birational automorphisms of smooth projective varieties $X$ and $Y$. One can check that~\mbox{$\Exc(f\times g) = \Exc(f)\times Y\cup X\times \Exc(g)$} as well as $\Ind(f\times g) = \Ind(f)\times Y\cup X\times \Ind(g)$. Then by Lemma \ref{lemma: criterion for AS} we complete the result.
\end{proof}
Once $f$ does not satisfy one of our properties it is more difficult to prove that so does its product with $g$. However, the following assertion gives us the necessary result.
\begin{lemma}\label{lemma: if f does not lie in S}
 Let $X$ and $Y$ be algebraic varieties and let $f\in \Bir(X)$ be a birational automorphism which does not lie in  $S(X)$ where $S = \prReg,\Reg$, or $\PsReg$. Then  the product automorphism $f\times \mathrm{id}_Y$ does not lie in $S(X\times Y)$.
\end{lemma}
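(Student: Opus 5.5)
We argue by contraposition: assuming $F = f\times \mathrm{id}_Y$ lies in $S(X\times Y)$, we produce a model of $X$ on which $f$ has the corresponding property. The main tool is restriction to a general fibre over $Y$. Let $Z$ be a smooth (projective, if $S=\prReg$) birational model of $X\times Y$ on which $F$ is a regular automorphism (resp.\ a pseudo-automorphism). Let $\pi\colon Z\dashrightarrow Y$ be the rational map induced by the second projection. Since $F$ commutes with the projection to $Y$ and acts trivially on $Y$, we have $\pi\circ F=\pi$ as rational maps. After replacing $Y$ by a dense open subset $Y_0$ and $Z$ by the open subset $Z_0$ where $\pi$ is a morphism, the general fibre $Z_y=\pi^{-1}(y)$ is birational to $X$, and $F$ induces a birational automorphism $F_y$ of $Z_y$ conjugate to $f$.

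\textbf{Case $S=\Reg$.} Removing the indeterminacy locus of $\pi$ may destroy $F$-invariance, so we take $Z_0 = Z\setminus \overline{\bigcup_n F^n(\Ind(\pi))}$. Because $\pi\circ F=\pi$ and $F$ is regular, $F$ maps $\Ind(\pi)$ into $\Ind(\pi)$: indeed, on $F^{-1}(\Dom\pi)$ the map $\pi=\pi\circ F$ is defined. Hence $\Ind(\pi)$ is $F$-invariant, and $F$ restricts to an automorphism of $Z_0=Z\setminus\Ind(\pi)$, where $\pi$ is now a morphism. For general $y$ the fibre $Z_y$ is smooth by generic smoothness (characteristic zero), and $F|_{Z_y}$ is a regular automorphism of a smooth model of $X$. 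This shows $f\in\Reg(X)$.

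\textbf{Case $S=\prReg$.} Here we cannot delete anything. Instead, resolve $\pi$ equivariantly: the graph closure $\Gamma\subset Z\times \overline{Y}$ (with $\overline Y$ a projective compactification, $\mathrm{id}$ acting trivially) is invariant under $F\times\mathrm{id}$, so $F$ lifts to an automorphism of $\Gamma$. Then take an equivariant resolution of singularities, using functoriality of resolution in characteristic zero, to get a smooth projective $\widetilde Z$ with a regular automorphism and an equivariant morphism to $\overline Y$. The general fibre is a smooth projective model of $X$ preserved by the automorphism, so $f\in\prReg(X)$.

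\textbf{Case $S=\PsReg$.} This is the main obstacle, since $F$ is only an isomorphism $U\to V$ off codimension $\geq 2$ sets, and restriction to fibres might fail. The cleanest route is Theorem~\ref{theorem: CAT0}(1). Using $\Exc(f^n\times\mathrm{id}) = \Exc(f^n)\times Y$ (computed on the model $X\times Y$), the number of components of $\Exc(F^n)$ equals that of $\Exc(f^n)$. Since $f\notin\PsReg(X)$, this number is unbounded, so $F\notin\PsReg(X\times Y)$. For this we need to pick a smooth model of $X$ (and of $Y$) beforehand, which is harmless since the criterion is birationally invariant. The only point to check is the product formula for exceptional loci, which follows because an irreducible divisor $D\subset X\times Y$ is contracted by $F$ if and only if it is a pullback $E\times Y$ with $E$ contracted by $f$. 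Divisors dominating $X$ are not contracted, because $F$ is an isomorphism over the regular locus of $f$. The same argument could also reprove the $\Reg$ case, but the fibre argument above is more direct.
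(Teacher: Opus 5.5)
Your proposal is correct. For $S=\PsReg$ it is the paper's argument: the number of components of $\Exc(f^n\times \mathrm{id}_Y)=\Exc(f^n)\times Y$ is unbounded, so Theorem~\ref{theorem: CAT0}(1) applies. Your justification of this product formula for exceptional loci also fills in a step the paper leaves implicit.

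For $S=\Reg$ and $S=\prReg$ you take a genuinely different route. The paper only cites external results: the Lonjou--Urech $\mathrm{CAT}(0)$ criterion for $\Reg$, and a theorem of Bedford--Kim for $\prReg$. Note that the $\Reg$ citation needs the general form of Lonjou--Urech's Proposition 4.16, since Theorem~\ref{theorem: CAT0}(2) as stated here only covers pseudo-automorphisms of threefolds.

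You argue geometrically instead. Since $\pi\circ F=\pi$, the domain of definition of $\pi$ is $F$-invariant; to get equality rather than only $F(\Ind(\pi))\subset\Ind(\pi)$, run the same argument with $F^{-1}$. So $F$ restricts to an automorphism of a general fibre. That fibre is smooth by generic smoothness, irreducible, and birational to $X$, with $F$ inducing $f$. In the projective case you first make $\pi$ a morphism equivariantly, via the graph closure in $Z\times\overline{Y}$ followed by a functorial resolution.

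Compared with the paper, your argument is self-contained and works in every dimension. It uses only characteristic-zero tools available under the paper's standing hypotheses, and it produces an explicit model for $f$. The paper's route is shorter but rests on heavier machinery.

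Your worry about $\PsReg$ is in fact unfounded. Take $\pi$ with values in a projective $\overline{Y}$; then $\Ind(\pi)$ has codimension at least $2$. Shrink $F\colon U\to V$ so that both source and image avoid $\Ind(\pi)$, then restrict to a general fibre. The complements still have codimension at least $2$, so the same fibre argument gives $f\in\PsReg(X)$ without the $\mathrm{CAT}(0)$ criterion. What has no obvious analogue is the equivariant compactification step for $\prPsReg$ and $\AS$, which is consistent with the paper leaving those cases open.
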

\begin{proof}
 First, assume that $S = \PsReg$. By Theorem \ref{theorem: CAT0} the birational automorphism $f$ does not lie in the set $\PsReg(X)$ if the number of components of $\Exc(f^n)$ is not bounded. Note that $f\times  \mathrm{id}_Y$ contracts the divisor $E\times Y$ if $E\subset\Exc(f)$. Therefore, the number of components of $\Exc((f\times  \mathrm{id}_Y)^n)$ equals the number of components of $\Exc(f^n)$. Thus, $f\times \mathrm{id}_Y$ does not lie in $\PsReg(X\times Y)$.

 Note that the same reason works for $S = \Reg$ by \cite[Proposition 4.16]{Lonjou_Urech_CAT0}. Finally, if $S = \prReg$ then the result follows from \cite[Theorem 1.5]{Bedford_Kim_Pseudo}.
\end{proof}

\section{An algebraically stable automorphism which is not pseudo-regularizable}\label{section: example 2}
\begin{theorem}\label{theorem: CDX is non-regularizable}
 Let $f\in \Bir(\p^k)$ be a birational automorphism of the projective space $\p^k$ of degree $d\geqslant 2$. Denote by $g_A\in \Bir(\p^k)$ the composition $g_A  = A\circ f$ where $A\in \PGL(k+1,\KK)$. Then if $A$ is very general then the birational automorphism~$g_A$ is algebraically stable on $\p^k$; however, it is not pseudo-regularizable.
\end{theorem}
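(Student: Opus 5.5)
The plan is to treat the two assertions separately. Both rest on the same genericity mechanism: for a very general $A$, the forward orbit under $g_A$ of the finitely many "bad" subvarieties avoids the indeterminacy locus. This is the argument of \cite{CDX}.

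First, algebraic stability on $\p^k$. Since $\NS(\p^k)=\ZZ$, algebraic stability means $\deg(g_A^n)=\deg(g_A)^n=d^n$ for all $n$. A standard criterion (in the spirit of Lemma \ref{lemma: criterion for AS}, and its converse on $\p^k$) reduces this to the following: for every irreducible component $E$ of $\Exc(g_A)=\Exc(f)$ and every $n\geqslant 1$, the proper image $g_A^{n}(E)$ is not contained in $\Ind(g_A)=\Ind(f)$. Note that $\Ind(g_A)=\Ind(f)$ does not depend on $A$, and $g_A(E)=A(f(E))$.

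Fix $E$ and $n$. I will show that the set of $A\in\PGL(k+1,\KK)$ with $g_A^{m}(E)\not\subset\Ind(f)$ for all $m\leqslant n$ is a nonempty Zariski open subset. Openness follows by a standard semicontinuity argument on the family $A\mapsto g_A^m(E)$. For nonemptiness, argue by induction on $m$: choose $A$ so that $A(f(E))$ is a general translate. Then its generic point lies outside $\Ind(f)$, because translates of a fixed subvariety by general elements of $\PGL$ are not contained in the codimension-$\geqslant 2$ closed set $\Ind(f)$. The next image is then again movable by $A$.

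Taking the countable intersection over all $E$ and $n$, which is where the uncountability of $\KK$ is used, gives algebraic stability for very general $A$.

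Second, non-pseudo-regularizability. By Theorem \ref{theorem: CAT0}(1), it suffices to find an irreducible divisor $E$ with $\mathrm{codim}\, g_A^n(E)\geqslant 2$ for all $n\geqslant1$ and $g_A^{n}(E)$ a divisor for all $n\leqslant -1$. Since $d\geqslant 2$, $f$ is not an automorphism, so $f^{-1}$ contracts some irreducible divisor. Equivalently, $g_A^{-1}=f^{-1}\circ A^{-1}$ contracts the divisor $E_0:=A(E')$, where $E'\subset\Exc(f^{-1})$.

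I will instead take $E$ to be a component of $\Exc(g_A)=\Exc(f)$, so that $g_A(E)$ has codimension $\geqslant 2$. To keep $g_A^n(E)$ of codimension $\geqslant 2$ for $n\geqslant 1$, I need that the positive orbit of $g_A(E)$ never enters $\Ind(g_A)$ in a way that blows it back up to a divisor. The same genericity argument as above gives, for very general $A$, that $g_A^n(E)\not\subset\Ind(g_A)$ for all $n$. Then $g_A^{n+1}(E)$ is the image under the map $g_A$, which is regular at the generic point of $g_A^{n}(E)$, and so has dimension at most that of $g_A^{n}(E)$.

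Symmetrically, for $n\leqslant -1$, I need that the backward orbit $g_A^{-n}(E)$ stays a divisor. This means the divisor $E$ never lands inside $\Exc(g_A^{-1})$ under iterates of $g_A^{-1}$. Since $g_A^{-1}=f^{-1}\circ A^{-1}$ has the same structure as $g_A$ with $A$ replaced by $A^{-1}$, very general $A$ ensures that the divisorial images $g_A^{-n}(E)$ are not among the finitely many components of $\Exc(g_A^{-1})=A(\Exc(f^{-1}))$. This is again a countable union of proper closed conditions on $A$, each proper because we can move $A$ freely.

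The main obstacle is verifying that each individual bad condition, namely $g_A^n(E)\subset\Ind(f)$ or $g_A^{-n}(E)\subset\Exc(g_A^{-1})$, is a proper closed subset of $\PGL(k+1)$, that is, exhibiting a single $A$ avoiding it. For this I would first try to specialize $A$ so that the relevant orbits are explicitly controlled, for instance choosing $A$ sending $f(E)$ far from $\Ind(f)$ step by step. This works because $\PGL(k+1)$ acts transitively on points of $\p^k$, which lets one move the generic point of each successive image off the fixed closed set at every stage.
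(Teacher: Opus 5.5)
Your outline matches the paper's proof. You take a component $E$ of $\Exc(f)=\Exc(g_A)$ with $Z=f(E)$. You require that the forward images $g_A^{i}(A(Z))$ never lie in $\Ind(f)$, which gives algebraic stability and keeps $g_A^{n}(E)$ of codimension $\geqslant 2$. You require that the backward images $g_A^{-i}(E)$ never lie in $\Exc(g_A^{-1})=A(\Exc(f^{-1}))$; these are the paper's sets $F_{E,n}$ and $G_{E,n}$. Then you intersect countably many nonempty Zariski open subsets of $\PGL(k+1,\KK)$ and apply Theorem \ref{theorem: CAT0}(1).

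The gap is the one non-formal step, nonemptiness. For each $n$ you need a \emph{single} $A$ satisfying the conditions for all $0\leqslant i\leqslant n$, and your induction does not produce it. Only the first step is a genuine ``general translate'' statement, because $f(E)$ does not depend on $A$. For $i\geqslant 2$, the variety $f(g_A^{i-1}(E))$ that you want to move by $A$ itself depends on $A$. So changing $A$ to push the $i$-th image off $\Ind(f)$ also changes all the earlier images. The natural way to decouple the steps is to modify $A$ by elements of $\PGL(k+1,\KK)$ fixing the orbit points already placed. But the pointwise stabilizer of $k+2$ points in general position is trivial, so this stops working after a bounded number of steps, and transitivity on single points does not address it. Density of the set of $A$ that are good up to step $i$ does not help either, since a priori the closed bad condition at step $i+1$ could contain all of it. This is precisely the content of \cite[Lemma 1.6]{CDX} (Lemma \ref{lemma: from CDX}). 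The paper invokes it for both $F_{E,n}$ and $G_{E,n}$, the latter with $f^{-1}$, $A^{-1}$ and $W=\Exc(f^{-1})$. You have to cite it or replace it by a real argument.

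For the forward conditions a direct specialization does work, and even gives one $A$ for all $n$. Take a general $p\in E$, put $q=f(p)\in Z$, and choose $A$ with $A(q)=p$. Then $p\in A(Z)$ is a fixed point of $g_A$ outside $\Ind(f)$, so $p\in g_A^{i}(A(Z))$ for every $i$. For the backward conditions no construction of this kind is possible. If a point $e\in E$ had a periodic orbit under $g_A^{-1}$ along which $g_A^{-1}$ is a local isomorphism, then $g_A$, hence $f$, would be a local isomorphism at $e$. That contradicts $E\subset\Exc(f)$. So there you really need Lemma \ref{lemma: from CDX} or some other argument; over $\mathbb{C}$, for instance, an attracting fixed point of $g_A^{-1}$ whose basin meets $E$ would do.
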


\begin{proof}
 The algebraic stability of $g_A$ for a very general $A\in \PGL(k+1,\KK)$ is proved in \cite[Theorem 1.5]{CDX}. It remains to show that $g_A$ is not pseudo-regularizable i.e. it suffices to show that the number of components of the exceptional locus $\Exc(g_A^n)$ is not bounded.

 We first recall the argument used to establish algebraic stability. Since the degree of $2$ the exceptional locus of $f$ is not empty.
 Fix an irreducible component $E$ of $\Exc(f)$ and consider its proper image $Z\subset \p^k$. Denote by $F_{E,n}$ and $G_{E,n}$ the following subsets of the group~$\PGL(k+1,\KK)$:
 \begin{align*}
  &F_{E,n} = \left\{A\in \PGL(k+1, \KK)\mid g_A^i(A(Z)) \text{ does not lie in }\Ind(f) \text{ for } 0\leqslant i\leqslant n\right\};\\
  &G_{E,n} = \left\{ A\in \PGL(k+1,\KK) \mid (A^{-1}\circ f^{-1})^{i}(A^{-1}(E)) \text{ does not lie in }\Exc(f^{-1}) \text{ for } 0\leqslant i\leqslant n\right\}.
 \end{align*}
 The sets $F_{E,n}$ and $G_{E,n}$ are Zariski open subsets of~$\PGL(k+1,\KK)$ and they are not empty for any $n\geqslant 1$ by Lemma \ref{lemma: from CDX}. In particular, the intersection $\bigcap_{n\geqslant 0} F_{E,n}\cap G_{E,n}$ is not empty in $\PGL(k+1,\KK)$ since $\KK$ is an uncountable field. Thus, a very general $A\in\PGL(k+1, \KK)$ lies in $F_{E,n}$ and in $G_{E,n}$ for any $n\geqslant 0$. Now choose $A$ in the following set
 \[
  A\in \bigcap_{E\subset \Exc(f), n\geqslant 0} F_{E,n} \cap  \bigcap_{E\subset \Exc(f), n\geqslant 0} G_{E,n}.
 \]
 Then $A$ lies in the intersection of countably many non-empty Zariski open subsets of $\PGL(k+1,\KK)$. This intersection is not empty; thus, there exists $A$ such that $g_A^n(E)$ never lies in $\Ind(g_A)$ and also the proper preimage $g_A^{-n}(E)$ is a divisor for any $E\subset \Exc(f)$ and any $ n\geqslant 1$. Therefore, $g_A$ is algebraically stable by Lemma \ref{lemma: criterion for AS} and it is not pseudo-regularizable by the first assertion of Theorem \ref{theorem: CAT0}.
\end{proof}
\begin{lemma}[{\cite[Lemma 1.6]{CDX}}]\label{lemma: from CDX}
 Let $W$ and $Q$ be non-empty, irreducible, Zariski closed subsets of $\p^k$ and let $f\in \Bir(\p^k)$ be a birational automorphism such that $\Ind(f)\subset W$. For any $n\geqslant 0$ there exists an element $A\in\PGL(k+1,\KK)$ such that $(A\circ f)^i(A(Q))$ is not contained in $W$ for all $i = 0,\dots, n$.
\end{lemma}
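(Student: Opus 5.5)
We prove the statement by induction on $n$, building a Zariski open condition on $A$ step by step and using the freedom of the group $\PGL(k+1,\KK)$ at each stage. The set $\mathcal{U}_n$ of $A$ for which $(A\circ f)^i(A(Q))\not\subset W$ for $0\leqslant i\leqslant n$ should be Zariski open in (an open subset of) $\PGL(k+1,\KK)$. Indeed, as $A$ varies, the proper images $(A\circ f)^i(A(Q))$ form a constructible family, and ``not contained in a fixed closed set $W$'' is an open condition. Since $\PGL(k+1,\KK)$ is irreducible, it then suffices to show that each $\mathcal{U}_n$ is non-empty. A cleaner and more robust formulation is to show the stronger condition that the \emph{generic point} of $(A\circ f)^i(A(Q))$ avoids $W$.

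For $n=0$ we need $A(Q)\not\subset W$. This holds for a general $A$, because $\PGL(k+1,\KK)$ acts transitively on points of $\p^k$: pick $q\in Q$ and $p\notin W$, and take $A$ with $A(q)=p$. For the inductive step, suppose $A_0\in\mathcal{U}_{n-1}$, and set $Q_{n-1}=(A_0\circ f)^{n-1}(A_0(Q))$. Then $Q_{n-1}\not\subset W\supset\Ind(f)$, so $f$ is defined at the generic point of $Q_{n-1}$ and its proper image $f(Q_{n-1})$ is a well-defined irreducible subvariety. We then modify $A$ by composing with some $B\in\PGL(k+1,\KK)$. Ideally we replace $A_0$ by $BA_0$ with $B$ close to the identity, so that the earlier conditions, which are open, are preserved, while $B(f(Q_{n-1}))\not\subset W$. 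The latter is again a non-empty open condition on $B$ by transitivity.

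The main obstacle is that changing $A$ to $BA_0$ changes \emph{every} earlier iterate, not only the last one, so the steps cannot simply be decoupled. The way around this is to use irreducibility of $\PGL(k+1,\KK)$. The set $\mathcal{U}_{n-1}$ is a non-empty open subset, hence dense. The condition $(A\circ f)^n(A(Q))\not\subset W$ defines an open subset $\mathcal{V}_n$ of $\mathcal{U}_{n-1}$ (openness uses again that $\Ind(f)\subset W$, so the iterates are defined generically there). Two non-empty open subsets of an irreducible variety intersect, so it suffices to exhibit a single $A$ in $\mathcal{U}_{n-1}$ lying in $\mathcal{V}_n$.

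To produce such an $A$, I would consider the morphism
\[
\Phi\colon \mathcal{U}_{n-1}\times \PGL(k+1,\KK)\dashrightarrow \p^k,\qquad (A,B)\mapsto B\bigl(f((A\circ f)^{n-1}(A(q)))\bigr)
\]
for a general point $q\in Q$. By transitivity, for fixed $A$ the image of $\Phi$ in $B$ is all of $\p^k$. The point is to realize the ``last factor'' $B$ inside $A$ itself, by a dimension or genericity argument showing that the map $A\mapsto (A\circ f)^n(A(q))$ is dominant onto $\p^k$ on $\mathcal{U}_{n-1}$. Dominance would follow by differentiating along the one-parameter family $A_t=B_tA_0$: the last application of $A$ contributes the full tangent space of $\p^k$ at the image point. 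The earlier applications only add further terms, which could a priori cancel this contribution, and controlling this is the delicate part.

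If that cancellation issue resists, I would instead argue by contradiction. If $(A\circ f)^n(A(q))\in W$ for all $A$ in the dense open set $\mathcal{U}_{n-1}$, then specializing $A$ along suitable families shows that $f(Q_{n-1})$, translated by arbitrary elements of $\PGL(k+1,\KK)$, lies in $W$. This is impossible since $W\neq\p^k$ (note that $W\neq\p^k$ is implicit, as otherwise the statement is false).
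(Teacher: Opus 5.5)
Your set-up is sound. The good set of $A$ is Zariski open, $\PGL(k+1,\KK)$ is irreducible, and the case $n=0$ is correct. You are also right that $W\neq\p^k$ must be assumed. Reducing to the orbit of one point $q\in Q$ is fine too: if every step is regular along the orbit, the $i$-th orbit point lies in $(A\circ f)^i(A(Q))$.

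\textbf{The gap.} The inductive step, which is the whole content of the lemma, is never carried out.
\begin{enumerate}
\item Your intersection argument (``two non-empty open sets intersect, so it suffices to exhibit one $A\in\mathcal U_{n-1}$ lying in $\mathcal V_n$'') is circular. Since $\mathcal V_n\subset\mathcal U_{n-1}$ by definition, its non-emptiness is exactly what must be proved.
\item You assert that $A\mapsto (A\circ f)^n(A(q))$ is dominant, but you say yourself that the first-order contributions along $B_tA_0$ may cancel, and you do not control them.
\item The fallback (``specializing $A$ along suitable families'') names no family and no mechanism. It faces the same decoupling problem: $A$ enters $n+1$ times, and $Q_{n-1}$ itself depends on $A_0$. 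So the hypothesis on all $A\in\mathcal U_{n-1}$ says nothing directly about $B(f(Q_{n-1}))$ for arbitrary $B$.
\item You cannot perturb only the last application of $A$. For $n\geqslant k+2$ you also cannot prescribe the whole orbit, since an element of $\PGL(k+1,\KK)$ is determined by the images of $k+2$ points in general position.
\end{enumerate}

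\textbf{The missing idea.} You need a device that controls all $n+1$ iterates at once, such as a degeneration of $A$ to a constant map.
\begin{enumerate}
\item Pick $p\notin W$; then $f$ is regular near $p$, because $\Ind(f)\subset W$.
\item Choose a hyperplane $H$ avoiding $p$, $f(p)$ and some $q\in Q$. Take coordinates with $H=\{x_0=0\}$ and $p=(1:0:\dots:0)$, and set $A_t=\mathrm{diag}(1,t,\dots,t)$ for $t\neq 0$.
\item Then $q_0(t)=A_t(q)$ extends to a morphism near $t=0$ with $q_0(0)=p$.
\item Suppose $q_i(t)$ is regular near $t=0$ with $q_i(0)=p$. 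Then $f(q_i(t))$ is regular near $0$ with value $f(p)\notin H$. Hence $q_{i+1}(t)=A_t\bigl(f(q_i(t))\bigr)$ again extends regularly to $t=0$ with $q_{i+1}(0)=p$.
\item Since $p\notin W$, the set of $t$ with $q_0(t),\dots,q_n(t)\notin W$ is a non-empty open subset of $\mathbb{A}^1$. Any $t\neq 0$ in it gives the required $A=A_t$.
\end{enumerate}
Intuitively, $A_t$ squeezes $\p^k\setminus H$ toward $p$, so every iterate stays near $p$, away from $W$, for as many steps as you like. This argument needs only that $f$ is regular at one point outside $W$.
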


\section{A non-regularizable pseudo-automorphism of a projective threefold}\label{section: pr ps-reg non-reg} \label{section: example 3}
Let $Q\subset \p^3$ be a smooth cubic surface. Then to each point $p\in Q$ one can associate a birational involution~$\sigma_p$ of $\p^3$ in the following way. We have to define the image of a general point $q\in \p^3$. In order to do it we consider a line $L$ in the projective space $\p^3$ passing through points $p$ and $q$. Since $q$ is general the line $L$ intersects $Q$ by exactly three distinct points $L\cap Q = \{p,p_1,p_2\}$. There exists a unique non-trivial involution $\sigma_L$ of the projective line $L\cong \p^1$ such that $\sigma_L(p_i) = p_i$ for $i = 1$ and $2$. We define $\sigma_p(q) = \sigma_L(q)$. One can easily deduce that $\sigma_p$ is an involution.

This class of birational involutions of $\p^3$ was studied in \cite{Blanc_Pseudo}. In particular, it was proved there that the restricted automorphism $\sigma_p|_{Q}$ is the identity i.e. $\sigma_p$ lies in the inertia group of $Q$. Moreover, the indeterminacy locus $\Ind(\sigma_p)$ has only one component; it is a curve $\Gamma_p$ lying on the surface $Q$ and having a unique singular point $\Sing(\Gamma_p) = p$. Let $X_p$ be the variety obtained by blowing up $p$ in $\p^3$ and then blowing up the strict transform of $\Gamma_p$ in $\mathrm{Bl}_p(\p^3)$. Then $\sigma_p$ induces a pseudo-automorphism of a smooth threefold $X_p$.

Moreover, if we consider several points $p_1,\dots, p_k\in Q\subset \p^3$ and denote by $X$ the successive blow-up of $\p^3$ in points $p_1,\dots, p_k$ and then in strict transforms of the curves $\Gamma_{p_1},\dots, \Gamma_{p_k}$ then $f = \sigma_{p_k}\circ\dots\circ \sigma_{p_1}$ induces a pseudo-automorphism of $X$, see \cite[Proposition 2.2]{Blanc_Pseudo}.

In the case $Q$ is very general, $k = 3$ and points $p_1,p_2$ and $p_3$ are sufficiently general the pseudo-automorphism $f = \sigma_{p_3}\circ\sigma_{p_2}\circ\sigma_{p_1}$ is not projectively regularizable by \cite[Theorem 1.3]{Kuznetsova_Regularizations}. Our goal is to extend this result slightly.

\begin{theorem}\label{theorem: pr ps-reg non-reg}
 Let $Q\subset \p^3$ be a very general smooth cubic surface and let $p_1, p_2, p_3$ be sufficiently general points on $Q$. Then the composition $f = \sigma_{p_3}\circ \sigma_{p_2}\circ\sigma_{p_1}$ induces a pseudo-automorphism of $X$ and it is not regularizable.
\end{theorem}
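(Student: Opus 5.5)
The pseudo-automorphism part is already available: by \cite[Proposition 2.2]{Blanc_Pseudo}, $f$ induces a pseudo-automorphism of the blow-up $X$ of $\p^3$ in $p_1,p_2,p_3$ and then in the strict transforms of $\Gamma_{p_1},\Gamma_{p_2},\Gamma_{p_3}$. So the plan concentrates on non-regularizability. Since $X$ is a smooth projective threefold and $f$ is a pseudo-automorphism of $X$, the second assertion of Theorem~\ref{theorem: CAT0} applies. It therefore suffices to show that the number of irreducible components of $\Ind(f^n)\subset X$ is unbounded as $n\to\infty$.

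To produce these components I will track curves. On $X$, the indeterminacy of the pseudo-automorphism $f$ consists of finitely many curves, namely flopping-type curves. I will choose an irreducible curve $C\subset\Ind(f)$. The natural candidates lie over the strict transforms of $\Gamma_{p_i}$ or over lines through pairs of points; I would take the one used in \cite[Theorem 1.3]{Kuznetsova_Regularizations}. For each $n$, one gets components of $\Ind(f^n)$ of the form $f^{-j}(C')$, where $C'$ runs over the curves of $\Ind(f)$ and $f^{-j}$ is defined generically along the curve. The goal is to show that for infinitely many $n$ these produce pairwise distinct curves, so that $\#\Ind(f^n)$ grows (at least linearly).

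The key input is the restriction to $Q$. Every $\sigma_{p_i}$ restricts to the identity on $Q$, but after blowing up $p_i$ the strict transform $\widetilde Q\subset X$ is the blow-up of $Q$ at $p_1,p_2,p_3$. Since $\Gamma_{p_i}\subset Q$ has codimension one in $Q$, blowing it up does not change $\widetilde Q$. I would then compute the action of $f$ on the exceptional divisors, or on the normal directions along $\widetilde Q$, and reduce the orbit of curves to a dynamical system on a surface. The natural choices are the exceptional divisors over the $\Gamma_{p_i}$, which are ruled surfaces over $\Gamma_{p_i}$, or $\widetilde Q$ itself, where $f$ might act nontrivially on the normal bundle data. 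On that surface the induced map is a composition of reflections/translations. For a very general $Q$ and general $p_i$, I expect it to be of infinite order, with orbit of a point or curve that is infinite. This is analogous to the argument in \cite{Kuznetsova_Regularizations}, where the action on $\NS(X)$ has spectral radius greater than $1$; the expected relation is $\lambda_1(f)>1$.

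The main obstacle is converting "infinite orbit" into "unboundedly many components of $\Ind(f^n)$". There are two possible routes.

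\begin{itemize}
\item \emph{Curve orbit.} Show that some curve in $\Ind(f^{-1})$ has forward orbit never meeting $\Ind(f)$ along a whole curve, yet consisting of pairwise distinct curves. Then each $\Ind(f^n)$ contains $n$ distinct curves, since a curve contracted at step $j$ stays indeterminate for $f^n$. The genericity conditions (very general $Q$, sufficiently general $p_i$) are countably many open conditions, handled exactly as in the proof of Theorem~\ref{theorem: CDX is non-regularizable}. Nonemptiness of each condition would be checked at one explicit configuration.
\item \emph{Fallback via \cite{Kuznetsova_Regularizations}.} Suppose instead that $\#\Ind(f^n)$ were bounded. Then by Theorem~\ref{theorem: CAT0}(2), $f$ would be regular on some smooth quasi-projective model $Y$, obtained from $X$ by removing and adding finitely many curves. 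I would then show this is incompatible with the growth of $(f^n)^*$, using the invariant description from \cite{Kuznetsova_Regularizations}. That paper only excludes projective models, so the extra work is to handle the non-proper $Y$, for example by compactifying $Y$ and using that the boundary contains only finitely many curves relevant to $f$-orbits.
\end{itemize}

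I would try the first route first.
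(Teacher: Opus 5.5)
\paragraph{Verdict: genuine gap.} Your reduction matches the paper's: apply Theorem~\ref{theorem: CAT0}(2) and make the number of components of $\Ind(f^n)$ grow by following the orbit of one curve. One remark on direction: a curve in $\Ind(f^{-1})$ whose forward orbit avoids $\Ind(f)$ puts its images into $\Ind(f^{-n})$, not $\Ind(f^n)$. This is harmless, since $f$ is regularizable iff $f^{-1}$ is.

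\paragraph{Missing idea: distinctness of the orbit curves.} You state pairwise distinctness as a goal but give no mechanism that would deliver it. Infinite order of some induced surface map, or $\lambda_1(f)>1$, says nothing about the orbit of one specific curve. The surface reduction is also doubtful, because every $\sigma_{p_i}$ restricts to the identity on $Q$.

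The paper gets distinctness from the choice of curve. Take $L$ to be the line through $p_1$ and a point of $\Gamma_{p_1}\cap\Gamma_{p_2}$. By \cite[Corollary 4.9, Lemma 2.11]{Kuznetsova_Regularizations}, $L\subset\Ind(f^m)$ for all $m\geqslant N$. By \cite[Corollary 4.30]{Kuznetsova_Regularizations}, $L\not\subset\Ind(f^{-n})$ for all $n\geqslant 1$. Distinctness of $C_n=f^{-n}(L)$ then follows:
\begin{itemize}
\item If $C_i=C_j$, the curves $C_i,\dots,C_{j-1}$ form a periodic orbit on which every iterate of $f$ is a local isomorphism at general points.
\item Then $L=f^i(C_i)$ could not lie in any $\Ind(f^m)$, a contradiction.
\item Moreover $C_j\subset\Ind(f^k)$ whenever $k\geqslant j+N$, which gives unboundedly many components.
\end{itemize}

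\paragraph{Avoidance of indeterminacy is not established either.} The Baire-category argument in Theorem~\ref{theorem: CDX is non-regularizable} works because Lemma~\ref{lemma: from CDX} uses the $\PGL(k+1,\KK)$-action to show that each open condition is nonempty. You have no analogous statement for the family of data $(Q,p_1,p_2,p_3)$. Checking ``one explicit configuration'' would have to work for all $n$ simultaneously, which is the original problem. The paper instead imports this fact from \cite[Corollary 4.30]{Kuznetsova_Regularizations}.

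\paragraph{Your fallback route fails as stated.} Growth of $(f^n)^*$ does not obstruct regularity on a quasi-projective model. For example, the monomial automorphism of Theorem~\ref{theorem: ps-reg and non-AS} is regular on $\mathbb{G}_m^3$ and has $\lambda_1=|\alpha|>1$. Any obstruction therefore has to come from counting components of $\Ind(f^n)$, which brings you back to the first route and its two missing inputs.
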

\begin{proof}
Let $Q$ and $p_1,p_2,p_3$ be the smooth cubic surface and the points described in Theorem \ref{theorem: pr ps-reg non-reg}. Consider the line $L$ in $\p^3$ passing through the point $p_1$ and a point in the intersection $\Gamma_{p_1}\cap \Gamma_{p_2}$ of curves on the surface~$Q$. By \cite[Corollary 4.30]{Kuznetsova_Regularizations} the curve $L$ does not lie in the indeterminacy loci $\Ind(f^{-n})$ for all $n\geqslant 1$. Denote by $C_n$ the proper image of $L$ under $f^{-n}$. Then $C_n$ does not lie in $\Ind(f^n)$ for all $n\geqslant 1$.

By \cite[Corollary 4.9]{Kuznetsova_Regularizations} and \cite[Lemma 2.11]{Kuznetsova_Regularizations} there exists an integer $N\geqslant 1$ such that $L\subset \Ind(f^m)$ for all $m\geqslant N$. Thus, the curves $C_n$ for $n\geqslant 1$ are all distinct since in the case $C_i = C_j$ where $i<j$ there is an $f$-periodic orbit of curves $C_i,\dots, C_{j-1}$ and none of them lies in $\Ind(f^k)$ for all $k\geqslant 1$.

Therefore, if $k\geqslant N$ then the indeterminacy locus $\Ind(f^k)$ contains the curves $L, C_1, \dots, C_{k-N+1}$. Thus, the locus $\Ind(f^k)$ has at least $k-N$ irreducible components. By the second assertion of Theorem \ref{theorem: CAT0} we conclude that $f$ is not regularizable.
\end{proof}

\section{A pseudo-automorphism with no algebraically stable model} \label{section: example 4}
\begin{theorem}\label{theorem: ps-reg and non-AS}
 Let $A \in \SL(3,\ZZ)$ be an integer matrix such that its characteristic polynomial is irreducible over $\QQ$ and its roots are complex numbers $\alpha, \overline{\alpha}$ and $\beta$ such that $\alpha$ is non-real and $|\beta|<1<|\alpha|$. Then $A$ defines an automorphism $f$ of the algebraic torus $\mathbb{G}_m^3$ with no algebraically stable model.
\end{theorem}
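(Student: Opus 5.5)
The plan is to argue by contradiction using the valuative (toric) description of monomial maps, in the spirit of Favre's proof that two-dimensional monomial maps whose matrix has non-real eigenvalues admit no stable model. Suppose $Y$ is a smooth projective model of $\mathbb{G}_m^3$ on which $f=f_A$ is algebraically stable. Let $N\cong\ZZ^3$ be the cocharacter lattice. Every $v\in N\setminus\{0\}$ defines a monomial divisorial valuation $\nu_v$ on $\KK(x,y,z)$. The map $f$ acts on these valuations linearly, by $\nu_v\mapsto\nu_{A'v}$ with $A'=A$ or $A^{T}$ according to convention; the two matrices have the same eigenvalues $\alpha,\overline{\alpha},\beta$. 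Write $N_{\RR}=P\oplus\ell$, where $P$ is the real $A'$-invariant plane belonging to $\alpha,\overline{\alpha}$ and $\ell$ is the $\beta$-eigenline. Since $|\beta|<1<|\alpha|$, for every $v\notin\ell$ the direction of $A'^n v$ converges to $P$. Inside $P$, $A'$ acts as $|\alpha|$ times a rotation by $\arg\alpha$. This angle is not a rational multiple of $\pi$: otherwise $\alpha/\overline{\alpha}$ would be a root of unity, so $\alpha^m$ would be real for some $m$, and irreducibility of the characteristic polynomial would then force $\alpha^m=\overline{\alpha}^m=\beta^m$, contradicting $|\beta|<|\alpha|$. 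Hence the directions of $A'^n v$ rotate densely around $P$.

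\textbf{Step 1 (comparison with a toric model).} I would attach to $Y$ a complete rational fan $\Delta$ in $N_{\RR}$ such that the center of $\nu_v$ on $Y$ depends only on the cone of $\Delta$ containing $v$ in its relative interior, and is a divisor only when $v$ lies on one of finitely many rays of $\Delta$. To construct $\Delta$, take a smooth projective toric variety $X_\Delta$ with a birational morphism $Y'\to X_\Delta$ and $Y'\to Y$, via equivariant resolution of the graph of $Y\dashrightarrow\p^3$. Then use that the center of a monomial valuation on $X_\Delta$ is the orbit closure of the cone containing $v$. Refine $\Delta$ so that the centers on $Y$ are also constant on open cones; this is the step that needs care.

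\textbf{Step 2 (producing a bad orbit).} Fix a ray $\rho=\RR_{\geqslant0}v$ of $\Delta$ whose center on $Y$ is a divisor $E$ and with $v\notin\ell\cup P$; if no such ray exists, first pass to a blow-up of $Y$ that adds one, which keeps algebraic stability. The rays $\RR_{\geqslant0}A'^n v$ are pairwise distinct: $v$ is not an eigenvector, and no power of $A'$ fixes the ray because the rotation angle is irrational. So for $n\gg0$ they avoid the finitely many rays of $\Delta$, and the proper image $f^n(E)$ has codimension at least $2$. By density of the rotation, the orbit of rays crosses infinitely often from the interior of one maximal cone to another through walls. The same argument applied to $f^{-1}$, whose matrix has $\overline{\alpha}^{-1},\alpha^{-1}$ expanding relative to $\beta^{-1}$ only in the direction $\ell$, controls when the center of $\nu_{A'^n v}$ is a divisor. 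From this I would extract an index $n$ at which the center of $\nu_{A'^n v}$ lies in $\Ind(f)$, while $\nu_{A'^{n+1}v}$ has divisorial center. This is exactly the situation that Lemma \ref{lemma: criterion for AS} (more precisely its converse, the standard Fornaess–Sibony characterization) forbids.

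\textbf{Main obstacle and fallback.} The main obstacle is Step 1 together with the final extraction in Step 2. The model $Y$ is arbitrary and non-toric, so the divisors of $Y$ need not be monomial, and the relation "center lies in $\Ind(f)$" must be read off from the combinatorics of $\Delta$ and $A'$. If this direct approach stalls, I would switch to degrees. Algebraic stability makes $d_n=(f^n)^*H\cdot H^2$ an integer linear recurrence sequence, where $H$ is ample on $Y$. On the other hand, $d_n$ differs by a bounded multiplicative factor from the toric degree $\max_{u}\langle u,A'^n w\rangle$, a piecewise-linear norm of $A'^n$. That norm behaves like $|\alpha|^n\,\phi(n\arg\alpha)$ for a non-constant, non-smooth periodic function $\phi$. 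One would then show, as Favre does in dimension two, that such a sequence cannot be a linear recurrence whose dominant roots have modulus $|\alpha|$. The key inputs are the irrationality of $\arg\alpha/\pi$ established above and the identity $\lambda_1(f)^2=|\alpha|^2=|\beta|^{-1}=\lambda_2(f)$, which rules out the cohomologically hyperbolic situation in which a stable model could be produced.
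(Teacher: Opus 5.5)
There is a genuine gap, and it sits exactly where the contradiction is supposed to come from: the ``extraction'' at the end of Step 2. Even if you grant Step 1, you yourself show that the rays $\RR_{\geqslant 0}A'^n v$ are pairwise distinct. So only finitely many of them are divisorial rays of $\Delta$. The pattern ``center of $\nu_{A'^n v}$ has codimension $\geqslant 2$ and center of $\nu_{A'^{n+1}v}$ is a divisor'' can therefore occur for at most finitely many $n$, and nothing in the rotation dynamics forces it to occur at all. A boundary divisor that is contracted by $f$ and never reappears is perfectly compatible with algebraic stability.

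Nor can you read indeterminacy off wall-crossings, as in Favre's toric argument. On a non-toric $Y$, different cones of $\Delta$ may share the same center, and $f$ may be regular at the generic point of that center even if $A'\sigma$ meets several cones. So density of $n\arg\alpha$ by itself yields nothing.

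Two further points. First, blowing up $Y$ to create a monomial divisor does not in general preserve algebraic stability. Second, the fallback does not close the gap. Comparing $(f^n)^*H\cdot H^2$ with the toric degree only up to a bounded multiplicative factor does not rule out linear recurrences such as $c|\alpha|^n+\mathrm{Re}(\gamma\alpha^n)$ with $0<|\gamma|<c$. You would need exact asymptotics, which toric data do not provide on an arbitrary $Y$. The identity $\lambda_1^2=\lambda_2$ is a remark, not an argument.

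The missing idea is to use the ``divisorial, contracted, divisorial again'' mechanism where it genuinely works, namely for divisors \emph{inside} the torus, and then to replace valuative combinatorics by linear algebra.

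\textbf{No contracted divisors of $T$.} Because the characteristic polynomial is irreducible (and no cyclotomic polynomial has degree $3$), $T$ has no $f$-periodic divisors. Suppose $T\dashrightarrow Y$ contracted a divisor $D\subset T$. Then the proper images on $Y$ of $f^n(D)$ would be divisors for $|n|\gg 0$ but not for $n=0$. Hence some divisor would be sent by an iterate of $f$ into $\Ind(f)$, contradicting stability.

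\textbf{Linear-algebra step.} It follows that $T\dashrightarrow Y\setminus(D_1\cup\dots\cup D_k)$ is an isomorphism in codimension one. This gives an exact sequence
$0\to M\to\bigoplus_i\ZZ D_i\to\NS(Y)\to 0$, where $M$ is the character lattice. Via $p_*q^*$ and the negativity lemma, $f^*$ lifts to a nonnegative integer matrix on $\bigoplus_i \ZZ D_i$ that acts on $M$ through $A$. Stability makes the spectral radius of this matrix equal to $\lambda_1(f)=|\alpha|$, while $\alpha$ is an eigenvalue. Perron--Frobenius then forces $\alpha/|\alpha|$ to be a root of unity. This contradicts exactly the irrationality of $\arg\alpha/\pi$ that you correctly proved.
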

\begin{lemma}[{\cite[Section 7.2]{BDJK}}]\label{lemma: example of matrix A}
 Let $A$ be the following matrix:
 \[
  A = \begin{pmatrix}
       0&-1&1\\ 1&0&0\\ 0&1&0
      \end{pmatrix}
 \]
 Then $A$ is an element of $\SL(3,\ZZ)$ which satisfies the condition in Theorem \textup{\ref{theorem: ps-reg and non-AS}}.
\end{lemma}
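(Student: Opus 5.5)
Verifying the lemma is a direct computation on the matrix $A$, in three steps.

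First, we check that $A\in\SL(3,\ZZ)$. The entries are integers, so only the determinant needs computing. Expanding along the first column, the only nonzero entry there is $a_{21}=1$, and its cofactor gives
\[
\det A = -1\cdot\det\begin{pmatrix}-1&1\\1&0\end{pmatrix} = -1\cdot(-1) = 1 .
\]

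Second, we compute the characteristic polynomial. Expanding $\det(tI-A)$ along the first row gives
\[
\chi(t)=\det\begin{pmatrix}t&1&-1\\-1&t&0\\0&-1&t\end{pmatrix} = t\cdot t^2 - 1\cdot(-t) + (-1)\cdot 1 = t^3+t-1 .
\]
This polynomial is monic with integer coefficients. If it were reducible over $\QQ$ it would have a rational root, hence an integer root dividing $1$. But $\chi(1)=1$ and $\chi(-1)=-3$, so it is irreducible over $\QQ$.

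Third, we locate the roots. The derivative $\chi'(t)=3t^2+1$ is positive, so $\chi$ is strictly increasing and has exactly one real root $\beta$. Since $\chi(0)=-1<0<1=\chi(1)$, we get $0<\beta<1$, so $|\beta|<1$. The other two roots are non-real and complex conjugate, say $\alpha,\overline{\alpha}$. The product of the roots is $-(-1)=1$, so $\beta|\alpha|^2=1$ and $|\alpha|=\beta^{-1/2}>1$. This gives $|\beta|<1<|\alpha|$ with $\alpha$ non-real.

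Every step is elementary. The only point needing care is getting the sign of the determinant and the constant term of $\chi$ right, since the inequality $|\alpha|>1$ comes entirely from the product of the roots being $+1$.
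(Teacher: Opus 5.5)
Your verification is correct: $\det A=1$, the characteristic polynomial $t^3+t-1$ has no rational roots and is strictly increasing with its unique real root $\beta\in(0,1)$, and since the product of the roots is $1$ you get $|\alpha|=\beta^{-1/2}>1$ with $\alpha$ non-real. The paper gives no argument of its own here (it only cites \cite[Section 7.2]{BDJK}), and your direct computation is exactly the check that citation stands in for.
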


\begin{lemma}\label{lemma: nice property of alpha}
 Let $A$ be a matrix as in Theorem \textup{\ref{theorem: ps-reg and non-AS}}. Then $\alpha/|\alpha|$ is not a root of unity.
\end{lemma}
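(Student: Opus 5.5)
We argue by contradiction. Suppose $\alpha/|\alpha| = \zeta$ is a root of unity, say $\zeta^m = 1$ with $m\geqslant 1$. Then $\alpha^m = |\alpha|^m$ is a positive real number. We will use the irreducibility of the characteristic polynomial $\chi_A$ together with $\det A = 1$ to reach a contradiction.

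\emph{Step 1: conjugation.} Since $\alpha^m$ is real, it equals its complex conjugate, so $\alpha^m = \overline{\alpha}^m$. The polynomial $\chi_A$ is irreducible over $\QQ$, so the Galois group of its splitting field acts transitively on the roots $\{\alpha,\overline{\alpha},\beta\}$. Choose $\tau$ with $\tau(\alpha) = \beta$. Then $\tau(\overline{\alpha})\in\{\alpha,\overline{\alpha}\}$, because $\tau$ permutes the roots and $\beta$ is already the image of $\alpha$.

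\emph{Step 2: the contradiction.} Apply $\tau$ to the identity $\alpha^m = \overline{\alpha}^m$. This gives $\beta^m = \tau(\overline{\alpha})^m$, and the right-hand side is either $\alpha^m$ or $\overline{\alpha}^m$. In both cases its absolute value is $|\alpha|^m$. Hence $|\beta|^m = |\alpha|^m$, which contradicts $|\beta|<1<|\alpha|$.

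The condition $\det A = 1$ is not needed for this argument; it only enforces $|\alpha|^2|\beta| = 1$, which is consistent with the hypotheses. The one step that needs care is the claim that some Galois automorphism sends $\alpha$ to $\beta$. This follows from the irreducibility of $\chi_A$: it makes the roots $\alpha,\overline{\alpha},\beta$ distinct and lets the Galois group act transitively on them.
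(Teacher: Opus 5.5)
Your argument is correct, and it takes a different route from the paper's. The paper passes to the integer matrix $A^m$. If $\alpha^m$ is real, then $\alpha^m=\overline{\alpha}^m$ is a repeated root of the cubic $\chi_{A^m}\in\ZZ[x]$. Implicitly, via $\gcd(\chi_{A^m},\chi_{A^m}')$, this root is rational and hence an integer, and then so is $\beta^m=\operatorname{tr}(A^m)-2\alpha^m$. Since $\det A=1$ gives $\beta=|\alpha|^{-2}\in(0,1)$, the integer $\beta^m$ would lie in $(0,1)$, which is a contradiction. You instead apply a Galois automorphism $\tau$ with $\tau(\alpha)=\beta$ directly to the relation $\alpha^m=\overline{\alpha}^m$, and the degree-three pigeonhole $\tau(\overline{\alpha})\in\{\alpha,\overline{\alpha}\}$ finishes the proof. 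The two routes draw on different hypotheses. Yours needs only irreducibility over $\QQ$ and $|\beta|\neq|\alpha|$, so it works for any rational cubic of this shape, without integral entries or $\det A=1$. The paper's proof relies on integrality and the determinant rather than on irreducibility of $\chi_A$; under those hypotheses irreducibility is automatic anyway. Your version also avoids the step ``repeated root $\Rightarrow$ integer roots'', which the paper leaves unstated.
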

\begin{proof}
 Since $\det(A) = 1$ one has $0< \beta = |\alpha|^{-2}<1$. If  $\alpha/|\alpha|$ is a root of unity then $\alpha^m/|\alpha|^m = 1$. Thus, the number $\alpha^m$ is real. Consider the eigenvalues of the matrix $A^m$, they are $\alpha^m,\overline{\alpha}^m = \alpha^m$ and $\beta^m$.  Thus, the characteristic polynomial of $A^m$ has a double root and is of degree $3$. Therefore, this polynomial is not irreducible and $\alpha^m$ and $\beta^m$ are integers. This is impossible since $0<\beta<1$.
\end{proof}

\begin{lemma}\label{lemma: no periodic divisors in T}
 Let $A\in \SL(3,\ZZ)$ be an integer matrix such that its characteristic polynomial is irreducible over $\QQ$. Then $A$ defines an automorphism $f$ of the algebraic torus $T = \mathbb{G}_m^3$ such that each non-trivial divisor $D\subset T$ is not periodic.
\end{lemma}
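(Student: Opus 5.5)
The plan is to reduce periodicity of a divisor to the existence of a rational invariant subspace for some power of $A$, and then to rule out such subspaces with the irreducibility of the characteristic polynomial.

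\textbf{Step 1: reduction to an invariant irreducible divisor.} Suppose an irreducible divisor $D\subset T$ satisfies $f^m(D)=D$ for some $m\geqslant 1$. Since $T$ is a torus, its coordinate ring $\KK[x_1^{\pm1},x_2^{\pm1},x_3^{\pm1}]$ is a UFD. Hence $D=\{h=0\}$ for an irreducible Laurent polynomial $h$, unique up to multiplication by a unit $c\,x^{u}$. Replace $f$ by $g=f^m$, which is the monomial map of $B=A^m$. Then $g^*h=c\,x^{u}h$ for some $c\in\KK^*$ and $u\in\ZZ^3$.

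\textbf{Step 2: the Newton polytope.} Write $h=\sum_{v\in S}a_v x^{v}$ with finite support $S\subset\ZZ^3$ and Newton polytope $P=\mathrm{conv}(S)$. Because $g$ is monomial, $g^*x^{v}=x^{B^{t}v}$. The relation $g^*h=c\,x^uh$ therefore gives $B^{t}S=S+u$, so $B^tP=P+u$. Normalize $h$ by a monomial factor so that the barycenter of $S$ lies at the origin. Since $B^t$ maps $S$ bijectively onto $S+u$, it sends the barycenter to the barycenter, so $u=0$. Thus $B^t$ permutes the finite set $S$, and some power $B^{tN}=A^{tmN}$ fixes every point of $S$.

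\textbf{Step 3: using irreducibility.} Let $W=\mathrm{span}_\QQ(S)$. If $W\neq 0$, then $A^{mN}$ has eigenvalue $1$ on $W$. So some power of $A$ has eigenvalue $1$, which means one of $\alpha,\overline\alpha,\beta$ is a root of unity. A root of unity has degree dividing $\varphi(\cdot)$ and a cyclotomic minimal polynomial, whose roots all have modulus $1$. Irreducibility of the cubic then makes the characteristic polynomial of $A$ cyclotomic of degree $3$. No cyclotomic polynomial has degree $3$, since $\varphi(n)$ is even for $n\geqslant 3$. This is a contradiction, so $W=0$.

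If $W=0$, then $S=\{0\}$ and $h$ is a unit, so $D$ would be empty. This contradicts $D$ being a divisor, and the lemma follows. For a reducible $D$, periodicity of $D$ forces each of its finitely many components to be periodic, so the irreducible case suffices.

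\textbf{Main obstacle.} The delicate point is Step 2: showing that $u=0$ after normalization and extracting a fixed vector. The barycenter argument should handle it, because $B$ is integral and linear and commutes with taking averages. The barycenter of $S$ may be rational rather than integral, so the normalization must be done over $\QQ^3$. Once $S\neq\{0\}$ is fixed pointwise by a power of $A$, irreducibility closes the argument.
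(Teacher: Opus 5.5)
Your proof is correct and follows essentially the same route as the paper: the support of the defining Laurent polynomial is permuted by an affine map $I\mapsto A^{N}I-K$ (up to the transpose convention), so a power of $A$ fixes a nonzero vector, and irreducibility of the characteristic polynomial together with the nonexistence of a cubic cyclotomic polynomial gives the contradiction. The only difference is how the translation is removed. You use the barycenter, which requires translating $S$ by a rational vector $-b\in\mathbb{Q}^3$ rather than multiplying $h$ by a monomial, as you yourself note. The paper instead iterates the affine permutation until it becomes $I\mapsto A^{NM}I-K'$ and takes the difference of two exponents.
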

\begin{proof}
 Denote by $x_1,x_2,x_3$ the coordinates of the torus $T$ in which $f$ acts as the matrix $A$. Any irreducible divisor $D\subset T$ equals the zero locus of an irreducible polynomial $ P(x) = \sum_I m_I x^I$. The inverse image of $D$ under $f$ is the zero locus of the following polynomial:
 \[
  f^*(P)(x) = \sum_I m_I x^{A\cdot I}.
 \]
 If $f^N(D)$ coincides with $D$ in $T$ for some $N\geqslant 1$ then  $f^*(P) = x^K P$ where $K = (k_1,k_2,k_3)^t$ is an integer vector. This implies that the following two sets are equal:
 \[
  S = \{I \mid m_I\ne 0\}  = \{A^N\cdot I - K \mid m_I\ne 0\}.
 \]
 Then the rule $\sigma(I) = A^N\cdot I - K$ defines an endomorphism of the set $S$. Note that if one has $\sigma(I') = \sigma(I'')$ then $I'  - I''$ lies in the kernel of the matrix $A^N$. Since $A$ is invertible this implies that $\sigma$ defines a permutation of degree $M$ of the finite set $S$.

 Set $K' = \sum_{j=0}^{M-1} A^j\cdot K$. Then we deduce that $I = A^{NM}\cdot I - K'$ for all $I$ such that $m_I\ne 0$. Thus, if there are at least two  monomials $x^{I'}$ and $x^{I''}$ in $P$ with non-zero coefficients then $I' - I''$ is a non-zero eigenvector of $A^{NM}$ with eigenvalue $1$. This is possible only if the characteristic polynomial of $A$ is irreducible and cyclotomic of degree $3$ which never happens.

 Thus, we get a contradiction and there is at most one non-zero monomial in $P$. Then $P$ defines a trivial divisor on $T$. This completes the proof.
\end{proof}

\begin{corollary}\label{corollary: if f is conjugated by a to AS then a is ps-iso}
 Let $A\in \SL(3,\ZZ)$ be an integer matrix such that its characteristic polynomial is irreducible over $\QQ$ and let $f$ be an automorphism of $T = \mathbb{G}_m^3$ defined by $A$. If  $\alpha\colon T\dashrightarrow X$ is a birational map to a projective birational model $X$ of $T$ such that $g = \alpha\circ f\circ\alpha^{-1}$ is an algebraically stable birational automorphism~$X$, then $\alpha$ does not contract divisors in $T$.
\end{corollary}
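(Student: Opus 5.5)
We argue by contradiction: suppose $\alpha$ contracts some irreducible divisor $D\subset T$, and derive a contradiction with Lemma \ref{lemma: criterion for AS} and Lemma \ref{lemma: no periodic divisors in T}.

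First, regard $\alpha$ as a birational map between $T$ and the smooth projective variety $X$. The divisors of $T$ contracted by $\alpha$ are exactly the components of $\Exc(\alpha)$, and there are finitely many of them. Call this finite set $\mathcal{E}$.

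Now fix $D\in\mathcal{E}$ and look at its forward orbit $f^n(D)$, $n\geqslant 0$. Since $f$ is a regular automorphism of $T$, each $f^n(D)$ is again an irreducible divisor of $T$. By Lemma \ref{lemma: no periodic divisors in T}, $D$ is not periodic, so the divisors $f^n(D)$ are pairwise distinct. Hence only finitely many of them lie in $\mathcal{E}$, and there is a largest $n_0\geqslant 0$ with $f^{n_0}(D)\in\mathcal{E}$. Replacing $D$ by $f^{n_0}(D)$, we may assume that $D$ is contracted by $\alpha$ but $f^n(D)$ is not contracted for every $n\geqslant 1$. Symmetrically, the backward orbit $f^{-n}(D)$ also consists of pairwise distinct divisors.

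The divisor $E_n:=\alpha(f^n(D))\subset X$ is an irreducible divisor for all $n\geqslant 1$, while $\alpha(D)$ has codimension at least $2$. Since $g^n=\alpha\circ f^n\circ\alpha^{-1}$, we get $g^{-1}(E_1)=\alpha(D)$ as proper images, so $g^{-1}$ contracts $E_1$. Equivalently, $g$ "blows up" the subvariety $\alpha(D)$ onto $E_1$. Now take $m\geqslant 1$ large enough that $f^{-m}(D)$ is not contracted by $\alpha$; this is possible because the backward orbit meets $\mathcal{E}$ only finitely often. Then $F=\alpha(f^{-m}(D))$ is a divisor on $X$. Its images satisfy $g^{m}(F)=\alpha(D)$, which has codimension at least $2$, and $g^{m+1}(F)=E_1$, which is a divisor, as proper transforms.

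The key step is to turn this into a contradiction with algebraic stability. Choose the smallest $j\leqslant m$ such that $g^{j}(F)$ has codimension at least $2$. Then $W=g^{j}(F)$ is a subvariety, contained in $\alpha(D)$, whose proper transform under further iterates eventually becomes a divisor again; at the latest, $g^{m+1-j}(W)$ is the divisor $E_1$. A birational map can only enlarge a proper subvariety of codimension at least $2$ to a divisor at points of indeterminacy. So some iterate $g^{i}(W)$ with $i\geqslant 0$ lies in $\Ind(g)$, which means the proper image of the divisor $g^{j-1}(F)$ under $g^{1+i}$ lies in $\Ind(g)$. This contradicts Lemma \ref{lemma: criterion for AS}, so $\alpha$ cannot contract any divisor of $T$.

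The main obstacle is exactly this last claim. We must justify that "a divisor goes down to codimension $\geqslant2$ and later comes back to a divisor" forces some intermediate image to lie in $\Ind(g)$. One route is the standard fact that if $W\not\subset\Ind(g)$, then $g$ is defined at the generic point of $W$ and $g(W)$ is the closure of $g(W\setminus\Ind(g))$. This gives $\dim g(W)\leqslant\dim W$, and by induction all forward images stay of codimension at least $2$ until one lands in $\Ind(g)$.
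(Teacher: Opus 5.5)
Your proof is correct and follows essentially the same route as the paper. Non-periodicity of divisors (Lemma~\ref{lemma: no periodic divisors in T}) and finiteness of $\Exc(\alpha)$ give a divisor $F$ whose proper image $g^m(F)=\alpha(D)$ has codimension at least $2$ while $g^{m+1}(F)$ is again a divisor; this forces a proper image of a divisor into $\Ind(g)$, contradicting Lemma~\ref{lemma: criterion for AS}, and your generic-point argument makes explicit the passage from the indeterminacy locus of an iterate $g^{k}$ to $\Ind(g)$, which the paper leaves implicit. One small slip: $W=g^{j}(F)=\alpha(f^{j-m}(D))$ need not be contained in $\alpha(D)$, though this is never used. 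After your normalization the detour through the smallest $j$ is unnecessary anyway, since $\alpha(D)=g^{m}(F)$ itself must lie in $\Ind(g)$: otherwise $g^{m+1}(F)=g(\alpha(D))$ would have codimension at least $2$.
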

\begin{proof}
 Assume that $D\subset T$ is a divisor whose proper image $Z\subset X$ under $\alpha$ is of codimension at least 2. Denote by $D_n = f^n(D)$ the image of $D$ under $f^n$ for $n\in \ZZ$ and by $Z_n$ the proper image of $D_n$ under $\alpha$. Since $\Exc(\alpha)$ can have only finitely many components, there exists strictly positive numbers $M_1, M_2$ such that for all $n\geqslant M_1$ or $n\leqslant -M_2$ the subvariety $Z_n\subset X$ is a divisor. Then $g^{M_2}$ contracts the divisor $Z_{-M_2}$ to the indeterminacy locus of $g^{M_1}$. By Lemma \ref{lemma: criterion for AS} we get a contradiction with the assumption that $g$ is algebraically stable.
\end{proof}

\begin{lemma}\label{lemma: exact sequence for NS}
 Let $T\cong \mathbb{G}_m^3$ be an algebraic torus and $\alpha\colon T\dashrightarrow X$ be a birational map which contracts no divisors in $T$ to a smooth projective birational model $X$ of $T$. If $\Exc(\alpha^{-1}) = \bigcup_{i=1}^k D_i$ where $D_i$ is an irreducible divisor in $X$ for $i = 1,\dots,k$ then the Neron--Severi group $\NS(X)$ fits the following exact sequence:
 \begin{equation}\label{equation: exact sequence for NS}
  0\to M \to \langle D_1,\dots, D_k\rangle \xrightarrow{r}  \NS(X)\to 0,
 \end{equation}
 where $r$ maps $D_i$ to its class in the group $\NS(X)$ and $M\cong \ZZ^3$ is the lattice of characters of the torus $T$.
\end{lemma}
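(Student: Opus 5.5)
The plan is to derive the sequence from the standard exact sequence for the Picard group of a smooth variety with an open subset removed, combined with the structure of $T$. First I will identify an open subset of $X$ isomorphic to an open subset of $T$ with small complement. Since $\alpha$ contracts no divisors of $T$, there is an open $U\subset T$ with $\mathrm{codim}_T(T\setminus U)\geqslant 2$ on which $\alpha$ is an isomorphism onto an open $V\subset X$. The complement $X\setminus V$ consists of the divisors $D_1,\dots,D_k$ (the components of $\Exc(\alpha^{-1})$, i.e.\ the divisors of $X$ not meeting the image of $T$) together with a closed subset of codimension at least $2$.

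Next I will apply the excision sequence for class groups on the smooth variety $X$:
\[
\bigoplus_{i=1}^k \ZZ D_i \to \mathrm{Pic}(X)\to \mathrm{Pic}(V)\to 0 .
\]
Removing codimension $\geqslant 2$ sets does not change $\mathrm{Pic}$ of smooth varieties, so $\mathrm{Pic}(V)\cong \mathrm{Pic}(U)\cong\mathrm{Pic}(T)=0$. Hence $r$ is surjective onto $\mathrm{Pic}(X)$.

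The kernel consists of combinations $\sum a_iD_i$ that are principal, i.e.\ $\sum a_i D_i=\mathrm{div}(h)$ for a rational function $h$ whose divisor is supported on $\bigcup D_i$. Such $h$ is a regular invertible function on $V$, hence on $U$, hence on $T$ by Hartogs (the complement has codimension $\geqslant 2$ in the normal variety $T$). Units on $T$ are $c\,x^m$ with $m\in M\cong\ZZ^3$. The map $m\mapsto \mathrm{div}(x^m)$ is injective because a function with trivial divisor on projective $X$ is constant, and $x^m$ is constant only for $m=0$. This gives $0\to M\to\langle D_i\rangle\to \mathrm{Pic}(X)\to 0$.

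The remaining step, which I expect to be the main obstacle, is replacing $\mathrm{Pic}(X)$ by $\NS(X)$. Here I will show $\mathrm{Pic}^0(X)=0$, i.e.\ $H^1(X,\mathcal O_X)=0$. This holds because $X$ is rational: $h^{1,0}$ is a birational invariant and vanishes for $\p^3$, and in characteristic zero $h^{0,1}=h^{1,0}$ by Hodge symmetry. Then $\mathrm{Pic}(X)$ is discrete. I also need to ensure that numerical or algebraic equivalence adds no further relations. Since $\mathrm{Pic}^0=0$, algebraic equivalence coincides with linear equivalence. If $\NS$ is meant modulo numerical equivalence, torsion must be dealt with. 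Torsion in $\mathrm{Pic}$ of a smooth projective rational variety vanishes, since $\pi_1=0$ (or via $H^2(X,\ZZ)_{\mathrm{tors}}$ being a birational invariant that is trivial for $\p^3$). Granting this, $\mathrm{Pic}(X)=\NS(X)$ and the sequence \eqref{equation: exact sequence for NS} follows.
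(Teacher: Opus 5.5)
Your proposal is correct and follows essentially the same route as the paper. Both arguments get surjectivity of $r$ from the fact that every divisor on the complement of $\bigcup D_i$ is principal, this complement being identified with $T$ in codimension one, and both identify the kernel with divisors of monomials using the units of $T$. Beyond the paper, you also prove injectivity of $M\to\langle D_1,\dots,D_k\rangle$ and $\mathrm{Pic}(X)=\NS(X)$ (via $H^1(X,\mathcal O_X)=0$ for the rational $X$); the paper tacitly relies on the latter when it treats algebraic equivalence as linear equivalence.
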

\begin{proof}
 Denote by $U$ the complement $X\setminus \bigcup_{i=1}^k D_i$. Then $\alpha\colon T\dashrightarrow U$ is an isomorphism in codimension $1$. Then $\alpha$ induces an isomorphism of the rings of regular functions $\KK[U]\cong \KK[T]\cong \KK[x_1^{\pm 1},x_2^{\pm 1}, x_3^{\pm 1}]$ and any divisor on $U$ is principal. Thus, any integer divisor $D$ on $X$ is algebraically equivalent to a divisor $\sum_{i=1}^k \mu_i D_i$ where $\mu_i\in \ZZ$. Therefore, the map $r$ in the exact sequence \eqref{equation: exact sequence for NS} is surjective.

 The linear combination $D = \sum_{i=1}^k \mu_i D_i$ lies in the kernel of $r$ if it is defined a principal divisor. Thus, there exists a function $h\in \KK(X)$ such  that $D = (h)$. Thus, the rational function $h$ has no zeros and poles in~$U$. Thus, $h$ is a monomial $h = x_1^{m_1}x_2^{m_2} x_3^{m_3}$ where $(m_1,m_2,m_3)\in \ZZ^3$ and $\Ker(r) = M$.
\end{proof}

\begin{lemma}\label{lemma: action on space of D_i}
  In the setting of Corollary \textup{\ref{corollary: if f is conjugated by a to AS then a is ps-iso}} if $D_1,\dots,D_k$ are irreducible divisor components in $\Exc(\alpha^{-1})$ then the inverse image of the birational automorphism $g = \alpha\circ f\circ\alpha^{-1}$ induces a linear action $g^{\#}$ on the real vector space $\langle D_1,\dots, D_k\rangle$ which preserves the cone $\RR_{\geqslant 0 }D_1 + \dots + \RR_{\geqslant 0} D_k$.
\end{lemma}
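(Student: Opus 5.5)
We define $g^{\#}$ by pulling back the boundary divisors along $g$ and checking that the pullback of any $D_i$ is again supported on the boundary with nonnegative coefficients. By Corollary \ref{corollary: if f is conjugated by a to AS then a is ps-iso}, $\alpha$ contracts no divisors of $T$. Put $U = X\setminus\bigcup_{i=1}^k D_i$. Then $\alpha$ restricts to an isomorphism in codimension $1$ between $T$ and $U$. Conjugating, $g$ restricts on $U$ to a map $g|_U = \alpha\circ f\circ \alpha^{-1}$ which is an isomorphism in codimension one, since $f$ is a regular automorphism of $T$. In particular $g$ maps no prime divisor meeting $U$ into the boundary, and contracts no such divisor. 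The same holds for $g^{-1}$.

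For each $D_j$ we take the divisorial pullback $g^*D_j$, that is, the pullback of $D_j$ under the morphism $g|_{X\setminus\Ind(g)}$, extended by closure. Since $\Ind(g)$ has codimension at least $2$, this is a well-defined effective Weil divisor, hence Cartier since $X$ is smooth. Its support consists of prime divisors $E$ with $g(E)\subset D_j$, where $g(E)$ denotes the proper image, possibly of smaller dimension. We claim that every such $E$ is one of $D_1,\dots,D_k$. Otherwise $E$ meets $U$, so $E\cap U$ is a divisor of $U$. Its image under $g|_U$ is then a divisor of $U$ (up to codimension-$2$ loci), and in particular is not contained in $D_j\subset X\setminus U$, a contradiction. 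Hence $g^*D_j=\sum_i a_{ij}D_i$ with $a_{ij}\in\ZZ_{\geqslant 0}$.

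We then define $g^{\#}$ on $\langle D_1,\dots,D_k\rangle_{\RR}$ by $g^{\#}(D_j)=\sum_i a_{ij}D_i$, extended linearly. It is linear by construction and maps each generator $D_j$ into the cone $\RR_{\geqslant0}D_1+\dots+\RR_{\geqslant0}D_k$, so it preserves that cone. Moreover, $r\circ g^{\#}=g^*\circ r$, where $g^*$ is the usual pullback on $\NS(X)$. This holds because the divisorial pullback on Weil divisors (codimension-$2$ indeterminacy) descends to $g^*$ on classes.

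The main obstacle is making rigorous the claim that $g$ sends no divisor meeting $U$ into the boundary. The map $\alpha$ is only an isomorphism in codimension one, so $g|_U$ is only defined outside a codimension-$2$ subset and is a pseudo-automorphism of $U$. The argument should be run as follows. Let $V\subset T$ and $V'\subset U$ be open subsets with complements of codimension at least $2$ such that $\alpha\colon V\to V'$ is an isomorphism. Then $f^{-1}(V)\cap V$ still has complement of codimension at least $2$ in $T$, so $g$ is an isomorphism from $\alpha(f^{-1}(V)\cap V)$ onto $\alpha(V\cap f(V))$, both of which have complements of codimension at least $2$ in $U$. Any prime divisor $E\not\subset\partial X$ meets the first of these open sets, and its image there lies in $U$, not in $D_j$. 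This completes the claim.
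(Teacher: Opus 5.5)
Your proposal is correct and follows essentially the paper's argument: because $g|_U$ is a pseudo-automorphism of $U$, the effective pullback of each $D_j$ is supported on $D_1,\dots,D_k$, so $g^{\#}$ maps the cone into itself. You compute the pullback by restricting to $X\setminus\Ind(g)$ and taking closures, whereas the paper writes it as $p_*q^*$ on a resolution of indeterminacies (the same divisor), and your explicit choice of the open sets on which $g|_U$ is an isomorphism fills in a step that the paper only asserts.
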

\begin{proof}
 In order to define the action $g^{\#}$ on the space $\langle D_1,\dots, D_k\rangle$ we fix an elimination of indeterminacies of the birational automorphism $g$:
\[\begin{tikzcd}
	& V & \\
	X && X
	\arrow["p"', from=1-2, to=2-1]
	\arrow["q", from=1-2, to=2-3]
	\arrow["g", dashed, from=2-1, to=2-3]
\end{tikzcd}\]
Here $V$ is a smooth projective variety, $p$ and $q$ are birational morphisms and the diagram commutes. Consider the divisor $g^*([D_i]) = p_*q^*[D_i]$. Note that $q^*[D_i] = [\widetilde{D_i}] + \sum_{r=1}^m b_r [E_r]$ where $\widetilde{D_i}$ is the strict transform of $D_i$ and $E_1,\dots,E_m$ are components of the exceptional locus $\Exc(q)$. Note that the numbers $b_r$ are non-negative by Mori negativity lemma. To complete the proof it remains to show that the subvarieties $p(\widetilde{D_i})$ and $p(E_r)$ for all~\mbox{$1\leqslant r\leqslant m$} are of codimension at least $2$ in $X$ or lie in $\bigcup_{i=1}^k D_i$.

Denote by $U$ the complement $X\setminus \bigcup_{i=1}^k D_i$. By Corollary \ref{corollary: if f is conjugated by a to AS then a is ps-iso} the birational map $\alpha$ induces an isomorphism in codimension $1$. Since $f$ is a regular automorphism of the torus $T$ we deduce that $g|_{U}$ is a pseudo-automorphism. Therefore, for all $1\leqslant i\leqslant k$ the proper inverse image of $D_i$ is either of codimension at least~$2$ or coincides with $D_j$ for $1\leqslant j \leqslant k$. One can also deduce that $\Exc(g)\subset \bigcup_{i=1}^k D_i$.

 Define $g^{\#}D_i = p(\widetilde{D_i})  + \sum_{r=1}^m b_r p(E_r)$ which is a divisor in $\ZZ_{\geqslant 0 }D_1 + \dots + \ZZ_{\geqslant 0} D_k$. This rule defines a linear operator  $g^{\#}$ of  the space $\langle D_1,\dots, D_k\rangle$ which preserves the cone $\RR_{\geqslant 0 }D_1 + \dots + \RR_{\geqslant 0} D_k$. By construction one has $g^*[D_i] = [g^{\#} D_i]$.
\end{proof}

\begin{proof}[Proof of Theorem \textup{\ref{theorem: ps-reg and non-AS}}] Assume that $X$ is a smooth projective threefold and $\alpha\colon T\dashrightarrow X$ is a  birational map such that $g = \alpha\circ f\circ\alpha^{-1}$ is an algebraically stable map. Denote by $D_1,\dots, D_k$ the exceptional divisors for~$\alpha^{-1}$. By Lemma \ref{lemma: exact sequence for NS} the Neron--Severi group of $X$ fits the exact sequence \eqref{equation: exact sequence for NS}.

By Lemma \ref{lemma: action on space of D_i} there exists a linear action $g^{\#}$ on the space $\langle D_1,\dots, D_k\rangle$ which commutes with the standard inverse image action of $g^*$ on the Neron--Severi group i.e. $g^*(r(D))=r(g^{\#}(D))$. On the kernel $M$ of the map $r$ the operator $g^{\#}$ induces the standard action of the matrix $A$ on the character lattice.

Since $g$ is algebraically stable the operator $g^*|_{\NS(X)}$ has an eigenvalue $\lambda_1(g) = |\alpha|$ and this eigenvalue has the maximal absolute value on  $\NS(X)$. Since the matrix $A$ has an eigenvalue $\alpha$ then $g^{\#}|_{\langle D_1,\dots, D_k\rangle}$ has eigenvalues $\alpha$ and $|\alpha|$ and has no eigenvalues of greater absolute value. By Lemma \ref{lemma: action on space of D_i} the operator~$g^{\#}$ is defined by a non-negative matrix in the standard basis.  Thus, by Perron--Frobenius theorem the number~$\alpha/|\alpha|$ is a root of unity; see, \cite[Theorem 8.3.4(iv)]{Perron_Frobenious_book}. However, this contradicts Lemma \ref{lemma: nice property of alpha} and the proof is complete.
\end{proof}
Theorem \ref{theorem: ps-reg and non-AS} gives a three-dimensional example of a regular automorphism which admits no algebraically stable model. The following assertion extends this example to all higher dimensions.
\begin{corollary}\label{corollary:  ps-reg and non-AS in any dimension}
 Let $h\in \Aut(\mathbb{G}_m^{r})$ be the automorphism of an algebraic torus with coordinates $y_1,\dots, y_r$ such that $h(y_1,\dots, y_r) = (\mu_1 y_1,\dots, \mu_r y_r)$, where $\mu_1,\dots, \mu_r$ are non-zero complex numbers such that $\mu_1^{i_1}\dots \mu_r^{i_r} = 1$ for $i_1,\dots,i_r\in \ZZ$ only if $i_1 = \dots = i_r = 0$. Let $f\in \Aut(\mathbb{G}_m^3)$ be as described in Theorem \textup{\ref{theorem: ps-reg and non-AS}}. Then the automorphism $f\times h\in \Aut(\mathbb{G}_m^{r+3})$ admits no algebraically stable model.
\end{corollary}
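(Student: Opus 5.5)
The plan is to repeat the proof of Theorem \ref{theorem: ps-reg and non-AS} on the torus $T' = \mathbb{G}_m^{3}\times\mathbb{G}_m^{r}$, with $F = f\times h$ in place of $f$. Suppose $\alpha\colon T'\dashrightarrow X$ is a birational map to a smooth projective model such that $g = \alpha\circ F\circ\alpha^{-1}$ is algebraically stable; we aim for a contradiction. The proof has three steps. First, $\alpha$ contracts no divisor of $T'$. Second, $\NS(X)$ is presented by the boundary divisors modulo the character lattice. Third, Perron--Frobenius applied to a nonnegative lift of $g^*$ forces $\alpha/|\alpha|$ to be a root of unity, which contradicts Lemma \ref{lemma: nice property of alpha}.

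\emph{Step 1 (no periodic divisors).} Here I need the analogue of Lemma \ref{lemma: no periodic divisors in T} for $F$: no irreducible divisor $D\subset T'$ is periodic. Write $D$ as the zero set of a Laurent polynomial $P=\sum m_{I,J}x^Iy^J$. Then
\[
F^*P=\sum m_{I,J}\,\mu^J x^{AI}y^J ,
\]
and periodicity gives $(F^N)^*P = c\,x^Ky^L P$ for some $N$. Comparing supports, the map $(I,J)\mapsto (A^NI-K,\,J-L)$ permutes the finite support. In the $J$-coordinate this map is a translation, so $L=0$. Iterating to make the permutation trivial, every difference of two exponents $I$ is a fixed vector of $A^{NM}$, hence zero by the argument of Lemma \ref{lemma: no periodic divisors in T}. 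So $P = x^{I_0}Q(y)$. The remaining condition is $Q(\mu^{N}y)=c\,Q(y)$ after the permutation step. Comparing two monomials $y^{J'}$, $y^{J''}$ gives $\mu^{NM(J'-J'')}=1$, which forces $J'=J''$ by the multiplicative independence of the $\mu_i$. Hence $P$ is a monomial and $D$ is trivial. With this in hand, the proof of Corollary \ref{corollary: if f is conjugated by a to AS then a is ps-iso} goes through verbatim: it uses only non-periodicity, finiteness of $\Exc(\alpha)$, and Lemma \ref{lemma: criterion for AS}. So $\alpha$ is an isomorphism in codimension one onto $U = X\setminus\bigcup D_i$.

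\emph{Step 2 (the presentation of $\NS(X)$).} Lemma \ref{lemma: exact sequence for NS} and Lemma \ref{lemma: action on space of D_i} hold in any dimension with the same proofs, with $M\cong\ZZ^{3+r}$ the character lattice of $T'$. We get an operator $g^{\#}$ on $\langle D_1,\dots,D_k\rangle$ given by a nonnegative matrix, compatible with $g^*$ via $r$. On $M$, $g^{\#}$ acts through the linear part of $F$ on characters. The factor $h$ has identity linear part, since translations act trivially on the character lattice up to scalar, which is invisible on divisors. So on $M$ the operator is $A\oplus \mathrm{id}_r$, and its eigenvalues are $\alpha,\overline{\alpha},\beta,1,\dots,1$.

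\emph{Step 3 (the spectral argument).} The degree sequence of $F$ is controlled by that of $f$, so $\lambda_1(F)=\lambda_1(f)=|\alpha|$. I expect this step to need care: bound it from below by $f$, a factor via projection, and from above by an explicit compactification $\p^3\times(\p^1)^r$ where $h$ is regular. Algebraic stability then makes $|\alpha|$ the spectral radius of $g^*$ on $\NS(X)$. Combined with $M$, the spectral radius of $g^{\#}$ equals $|\alpha|$, and $\alpha$ is an eigenvalue of $g^{\#}$ on this peripheral circle. Then \cite[Theorem 8.3.4(iv)]{Perron_Frobenious_book} forces $\alpha/|\alpha|$ to be a root of unity, contradicting Lemma \ref{lemma: nice property of alpha}.

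The main obstacle is Step 1, specifically making the bookkeeping of the scalar factors $\mu^J$ and the translation vectors $K,L$ rigorous. The point needing most care is showing that the mixed case, where $P$ involves both $x$ and $y$, reduces correctly.
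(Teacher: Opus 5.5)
Your proposal is correct and follows the paper's route. The paper proves the absence of periodic divisors for $f\times h$ (Lemma \ref{lemma: f times h-preiodic orbits}) by the same support/permutation argument combined with the multiplicative independence of the $\mu_i$, and then calls the rest ``analogous'' to the proof of Theorem \ref{theorem: ps-reg and non-AS}; your Steps 2 and 3 spell out that analogy (the action $A\oplus\mathrm{id}_r$ on the character lattice, and $\lambda_1(f\times h)=|\alpha|$, where only the upper bound $\lambda_1(f\times h)\leqslant|\alpha|$ is actually needed, since $M$ already forces the spectral radius of $g^{\#}$ to be at least $|\alpha|$).
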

\begin{proof}
 By Lemma \ref{lemma: f times h-preiodic orbits} the torus $\mathbb{G}_m^{r+3}$ contains no $f\times h$-periodic divisors. Then the proof is analogous to the proof of Theorem \ref{theorem: ps-reg and non-AS}.
\end{proof}
The following lemma is an easy corollary from Lemma \ref{lemma: no periodic divisors in T}.
\begin{lemma}\label{lemma: f times h-preiodic orbits}
 Let $h\in \Aut( \mathbb{G}_m^r)$ and $f\in \Aut(\mathbb{G}_m^3)$ be as described in Corollary \textup{\ref{corollary: ps-reg and non-AS in any dimension}}. Then the orbit of any divisor in $\mathbb{G}_m^{r+3}$ under the action of the automorphism $f\times h\in \Aut(\mathbb{G}_m^{r+3})$ is infinite.
\end{lemma}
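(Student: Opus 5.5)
Write $F = f\times h$ and use coordinates $(x,y) = (x_1,x_2,x_3,y_1,\dots,y_r)$ on $\mathbb{G}_m^{r+3}$. The plan is to adapt the argument of Lemma \ref{lemma: no periodic divisors in T}. Suppose, for a contradiction, that some irreducible divisor $D$ has a finite $F$-orbit. Then $F^N(D) = D$ for some $N\geqslant 1$. Write $D$ as the zero locus of an irreducible Laurent polynomial
\[
P(x,y) = \sum_{(I,J)} m_{I,J}\, x^I y^J .
\]
Then $F^{N*}P$ is a unit times $P$, that is, a monomial $c\,x^K y^L$ times $P$. A direct computation gives
\[
(F^N)^*(x^I y^J) = \mu^{NJ}\, x^{A^N I} y^J,
\]
where $\mu^{J} = \mu_1^{j_1}\cdots\mu_r^{j_r}$. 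Comparing the exponents of $y$ shows that $L = 0$. So the support $S = \{(I,J)\mid m_{I,J}\neq 0\}$ is preserved by the bijection $\sigma(I,J) = (A^N I - K, J)$.

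Because $\sigma$ is a permutation of the finite set $S$, some power $\sigma^M$ is the identity. Following the proof of Lemma \ref{lemma: no periodic divisors in T}, this gives $I = A^{NM}I - K'$ for every $(I,J)\in S$, with the same vector $K'$ for all elements of $S$. Hence for any two elements $(I',J'),(I'',J'')\in S$ the difference $I'-I''$ is fixed by $A^{NM}$. Since the characteristic polynomial of $A$ is irreducible and not cyclotomic, $1$ is not an eigenvalue of $A^{NM}$, so $I' = I''$. Therefore every monomial of $P$ has the same $x$-exponent $I_0$, and $P = x^{I_0}Q(y)$. Irreducibility then forces $D = \{Q(y) = 0\}$, i.e.\ $D = T_3\times D'$ for a divisor $D'\subset\mathbb{G}_m^r$ that is $h^N$-invariant.

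It remains to show that a diagonal map $h^N$ with multiplicatively independent $\mu_i$ has no invariant divisor $D'$. Invariance means $Q(\mu^N y) = c\,y^{L'}Q(y)$, and comparing $y$-exponents again gives $L' = 0$. So $\mu^{NJ} = c$ for every $J$ in the support of $Q$. For two support exponents $J',J''$ this gives $\mu^{N(J'-J'')} = 1$, and multiplicative independence forces $J' = J''$. Thus $Q$ is a monomial and $D'$ is trivial, which is a contradiction. This independence argument is the step I would check most carefully. The other delicate point is the uniformity of $K'$ across $S$ in the permutation argument: the cycles of $\sigma$ may have different lengths, so I would take $M$ to be a common multiple of all cycle lengths, so that $\sigma^M = \mathrm{id}$ and the formula for $K'$ is the same for every element of $S$.
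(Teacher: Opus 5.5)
Your proposal is correct and follows essentially the same route as the paper. Like the paper, you run the support-permutation argument of Lemma \ref{lemma: no periodic divisors in T} to show that all monomials of $P$ share one $x$-exponent, reduce to a divisor pulled back from $\mathbb{G}_m^r$, and then rule that divisor out using the multiplicative independence of the $\mu_i$. You also make explicit two steps the paper leaves implicit: that the unit has trivial $y$-part ($L=0$), and that taking $M$ to be the order of $\sigma$ makes $K'$ the same for every element of $S$.
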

\begin{proof}
 A divisor $D$ in $\mathbb{G}_m^{r+3}$ is the zero locus of an irreducible polynomial $P(x,y) = \sum_{I,J} m_{I,J}x^Iy^J$. The image of $D$ under $f\times h$ is defined by the following polynomial:
 \[
  (f\times h)^*(P)(x,y) = \sum_{I,J} \mu^J m_{I,J}x^{AI}y^J.
 \]
 Analogously to the proof of Lemma \ref{lemma: no periodic divisors in T} we deduce that if $D$ is periodic then there exists at most one index $I$ for which some $J$ satisfies $m_{I,J}\ne 0$. Therefore, $X^I$ is the divisor of $P$ and $I = (0,0,0)$ since $P$ is irreducible. We conclude that $P(X,Y) = P(Y)$; since the numbers $\mu_1,\dots, \mu_r$ are multiplicatively independent this implies that $P(Y)$ is a monomial in $y_1,\dots, y_r$. This completes the proof.
\end{proof}

\section{A non-pseudo-regularizable automorphism with no algebraically stable model}\label{section: example 5}
We recall the construction of a birational automorphism of $\p^3$ whose dynamical degree is a transcendental number introduced in \cite{BDJK}.
Given an element $A\in \SL(3,\ZZ)$ one can define the automorphism of the algebraic torus $h_A(z) = z^A$ where $z = (z_1,z_2,z_3)$ are the coordinate on the torus. The automorphism $h_A$ extends to an element $h_A\in \Bir(\p^3)$ where we consider on $\p^3$ the homogeneous coordinates $(x_0:x_1:x_2:x_3)$ such that $z_i = x_i/x_0$.

Let $J\in \Bir(\p^3)$ be the Cremona involution $J(x_0:x_1:x_2:x_3) = \left(\frac{1}{x_0}:\frac{1}{x_1}:\frac{1}{x_2}:\frac{1}{x_3}\right)$ and let $B$ be the element in $\PGL(4,\KK) = \Aut(\p^3)$ defined by the following matrix:
\begin{equation}\label{equation: matrix B}
 B = \begin{pmatrix}
      1&-1&1&-1\\ 1&1&-1&1\\-1&1&1&-1\\1&-1&1&1
     \end{pmatrix}.
\end{equation}
We denote by $g$ the birational involution $g = B^{-1}\circ J\circ B\in \Bir(\p^3)$. Then by \cite[Theorem 1.1]{BDJK} there exists $A\in \SL(3,\ZZ)$ such that the first dynamical degree of the birational automorphism $f = g\circ h_A$ is a transcendental number. In view of \cite[Section 7.2]{BDJK} one can take the following matrix $A$:
\begin{equation}\label{equation: matrix A for transcendental}
 A = \begin{pmatrix}
      -3&-14 &-12\\ 4&11&6\\-2&-4 &-1
     \end{pmatrix}.
\end{equation}
By Lemma \ref{lemma: AS implies lambda is algebraic} the birational automorphism $f$ does not admit an algebraically stable model. Our goal is to show that it is not pseudo-regularizable either.
\begin{theorem}\label{theorem: non-ps-reg and non AS}
 Let $A$ be the matrix defined in \textup{\cite[Section 7.2]{BDJK}}. Then the composition $f = g\circ h_A$ is a non-pseudo-regularizable birational automorphism of $\p^3$ which does not admit an algebraically stable model.
\end{theorem}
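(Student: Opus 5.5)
The part about algebraically stable models is already settled. By \cite[Theorem 1.1]{BDJK}, $\lambda_1(f)$ is transcendental, and Lemma \ref{lemma: AS implies lambda is algebraic} then rules out any algebraically stable model. What remains is to show that $f$ is not pseudo-regularizable. For this I would use the criterion in the first assertion of Theorem \ref{theorem: CAT0}. The goal is to find an irreducible divisor $E$ whose backward images $f^{n}(E)$, $n\leqslant -1$, are all divisors, and whose forward images $f^n(E)$, $n\geqslant 1$, all have codimension at least $2$. More robustly, it is enough to show that the number of components of $\Exc(f^n)$ is unbounded.

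\textbf{Step 1: exceptional divisors of $f$.} The map $h_A$ is an automorphism of the torus $\{x_0x_1x_2x_3\neq 0\}$. Its exceptional set in $\p^3$ lies among the four coordinate planes $H_i=\{x_i=0\}$, and for this $A$ (which has mixed-sign entries) some of these planes are contracted. The map $g=B^{-1}JB$ is a conjugate of the Cremona involution. It contracts exactly the four planes $B^{-1}(H_i)$ to the four points $B^{-1}(e_i)$, and it blows those points back up to the planes. So $\Exc(f)$ consists of coordinate planes contracted by $h_A$, together with the planes $h_A^{-1}(B^{-1}H_i)$, which are strict transforms of planes and are therefore not toric boundary. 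I would take $E$ to be one component $h_A^{-1}(B^{-1}H_i)$ of $\Exc(f)$, or rather its image $f(E)$, one of the points $B^{-1}(e_i)$.

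\textbf{Step 2: forward orbit stays small, backward orbit stays divisorial.} Following the analysis in \cite{BDJK}, the key feature of this example is that $B$ is chosen so that the orbits of the points $B^{-1}(e_i)$ under the torus map, and of the corresponding contracted planes, avoid the indeterminacy and exceptional configurations in a controlled, non-periodic way. That is precisely what produces the infinite (non-terminating) degree recursion and hence transcendence. I would extract from \cite[Section 7]{BDJK} the statement that a point $q=B^{-1}(e_i)$ has a forward orbit $f^n(q)$ that never lands in $\Ind(f)$ and so stays of codimension $3$. A dual argument, applied to $f^{-1}=h_A^{-1}\circ g$ (which has the same structure), gives that the backward images of $E$ are never contracted again and remain divisors. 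Theorem \ref{theorem: CAT0}(1) then gives non-pseudo-regularizability.

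\textbf{Main obstacle.} I expect Step 2, the orbit control, to be the hardest. It requires the explicit matrices $A$ and $B$ and the dynamics studied in \cite{BDJK}. If exact non-contraction in one direction is hard to certify, I would use the contrapositive route instead. Suppose $f$ were pseudo-regularizable. By Theorem \ref{theorem: CAT0}(1), $\Exc(f^n)$ would then have boundedly many components. On the resulting pseudo-automorphism model $Y$, with $Y$ quasi-projective, one would expect $\lambda_1(f)$ to be governed by a finite-rank linear recursion on divisor classes, namely the boundary divisors together with a finite set of exceptional divisors. This is analogous to Lemmas \ref{lemma: exact sequence for NS} and \ref{lemma: action on space of D_i}, and it would force the degree sequence $\deg f^n$ to satisfy a linear recurrence, making $\lambda_1(f)$ algebraic, which is a contradiction. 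The delicate point on this route is justifying that bounded exceptional complexity gives a rational generating function for degrees on $\p^3$. I would try to prove it by tracking the finitely many divisorial valuations that ever appear and writing $(f^n)^*H$ as a combination of them.
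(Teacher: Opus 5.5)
Your treatment of the algebraically stable part is the same as the paper's. The gap is in Step~2, which carries all of the non-pseudo-regularizability claim. The fact you propose to extract from \cite{BDJK} is false for this map.

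The points $B^{-1}(e_i)$ are $q_0=(1:0:0:-1)$, $q_1=(1:1:0:0)$, $q_2=(0:1:1:0)$ and $q_3=(0:0:1:1)$. All four lie on torus-invariant lines, which is exactly where the monomial map misbehaves. Use the convention $\tau(h_A(x))=A\tau(x)$ of Lemma~\ref{lemma: valuation of images of a point}. The vectors $Ae_1=(-3,4,-2)^t$, $Ae_2=(-14,11,-4)^t$ and $Ae_3=(-12,6,-1)^t$ all lie in the open chamber of the coordinate point $(0:1:0:0)$, so $h_A(q_0)=h_A(q_1)=(0:1:0:0)$. Every column of $B$ has entries $\pm1$, so $J$, and hence $g$, fixes every coordinate point. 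Thus $f(q_0)=f(q_1)=(0:1:0:0)$.

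This point lies in $\Ind(f)$. Indeed, $A(-e_1-e_2-e_3)=(29,-21,7)^t$ and $Ae_2$ lie in different chambers, so $h_A$ is undefined there. Its total transform contains the torus-invariant line $\{x_0=x_3=0\}$, which $g$ does not contract. For $q_2$ and $q_3$, $A$ maps the cones spanned by $-e_1-e_2-e_3,e_3$ and by $-e_1-e_2-e_3,e_1$ across walls of the fan. So $q_2,q_3\in\Ind(h_A)$, and again $g$ does not contract the torus-invariant lines produced. Hence the forward point orbit of every $q_i$ meets $\Ind(f)$ within one step. ``Never lands in $\Ind(f)$, hence stays of codimension~$3$'' cannot certify that $f^n(E)$ is not a divisor.

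What must be tracked instead is the divisorial valuation $(f^n)_*\mathrm{ord}_E$, whose center is the strict transform of $E$, via $\tau$. By Lemma~\ref{lemma: valuation of images of a point}, the center remains a coordinate point as long as $\min\{0,\tau_1,\tau_2,\tau_3\}$ is uniquely attained. In your orientation, for example with $E=h_A^{-1}(\{x_0-x_1+x_2-x_3=0\})$, you must control $A^n(1,1,0)^t$ for all $n\geqslant 1$. That forward orbit is driven by the complex eigenvalues of modulus $>1$, so you would need an analogue of \cite[Lemma 7.4]{BDJK} for a vector other than $e_1$, and you do not supply one. Your ``dual argument'' is also not symmetric to this, since backward iterates are pulled toward a single real eigendirection.

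The paper orients things the other way and applies Theorem~\ref{theorem: CAT0}(1) to $f^{-1}$ with $H=\{x_0-x_1+x_2-x_3=0\}$:
\begin{itemize}
\item $g$ contracts $H$ to $(1:0:0:-1)$, whose valuation has $\tau=(1,1,0)^t$.
\item The \emph{backward} orbit $A^{-n}(1,1,0)^t$ keeps the sign pattern $(+,-,+)$, by the positivity of $C$ in $A^{-1}=RCR^{-1}$ (Lemma~\ref{lemma: preimages of v}). So $f^{-n}(H)=(0:0:1:0)$.
\item $f^n(H)$ stays a divisor because $p=(1:t:1:2-t)\in H$ has $\tau(p)=e_1$, and $A^ne_1$ avoids all walls by \cite[Lemma 7.4]{BDJK}.
\end{itemize}

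Your fallback does not close the gap either. Whether pseudo-regularizable maps have algebraic $\lambda_1$ is exactly the open Question in the introduction. The mechanism you propose, that bounded exceptional complexity forces a finite-rank linear recursion on divisor classes, already fails for torus automorphisms. The maps $f_A$ of Theorem~\ref{theorem: ps-reg and non-AS} are regular on $\mathbb{G}_m^3$ and contract no divisors there, yet they admit no algebraically stable model.
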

In order to prove the theorem we first need to show several technical results. First, we need the following assertion which describes the orbit of the basis vector $e_1$ under the action of the matrix $A$.
\begin{lemma}[{\cite[Lemma 7.4]{BDJK}}]
 Let $A$ be as in \textup{\eqref{equation: matrix A for transcendental}} and let $e_1$ be the vector $e_1 = (1,0,0)^{t}$. If we denote $(A^n e_1)^t = (a_{n1},a_{n2}, a_{n3}) $ then the numbers $0,a_{n1}, a_{n2},a_{n3}$ are pairwise distinct for all $n\geqslant 1$.
\end{lemma}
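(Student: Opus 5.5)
The statement concerns six linear recurrence sequences. For each linear form
\[
\ell\in\{v_1,\ v_2,\ v_3,\ v_1-v_2,\ v_1-v_3,\ v_2-v_3\}
\]
on $\ZZ^3$, put $u_\ell(n)=\ell(A^ne_1)$. The lemma says exactly that $u_\ell(n)\neq 0$ for all $n\geqslant 1$ and all six forms. By Cayley--Hamilton, each $u_\ell$ satisfies the order-three recurrence given by the characteristic polynomial $\chi_A(t)=t^3-9t^2+\dots$ of $A$, which has trace $9$. So I would first compute $\chi_A$ and the first few vectors, e.g. $Ae_1=(-3,4,-2)^t$ and $A^2e_1=(-23,20,-8)^t$, and then treat each $\ell$ separately.

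\textbf{Step 1: congruences.} I would look for a modulus $m$ such that the reduction of $u_\ell$ modulo $m$ never vanishes. Since $A$ is invertible modulo $m$, the reduced sequence is purely periodic. If some period contains no zero, integer distinctness follows at once. This works only for forms with $\ell(e_1)\neq 0$, namely $v_1$, $v_1-v_2$ and $v_1-v_3$. For $v_2$, $v_3$ and $v_2-v_3$ one has $u_\ell(0)=0$, so every modulus sees a zero at multiples of the period and a pure congruence argument fails. Even for the good forms, small moduli fail: modulo $2$ the second coordinate is always even, and modulo $3$ one has $a_{12}\equiv a_{13}$. So I would search over moduli by computer, and if needed combine several.

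\textbf{Step 2: the $p$-adic Skolem argument for the remaining forms.} Choose a prime $p\nmid\det A$ and an integer $k$ with $A^k\equiv I \pmod p$. For each residue $r$ modulo $k$, the map $n\mapsto u_\ell(kn+r)$ extends to a $p$-adic analytic function on $\ZZ_p$, since $A^{kn}=\exp(n\log A^k)$. Expanding it as a power series $\sum c_j n^j$ and computing its first coefficients modulo suitable powers of $p$, Strassmann's theorem bounds its number of zeros in $\ZZ_p$. I expect that for $r=0$ the only zero is $n=0$, which is the trivial zero at index $0$. For the other residues $r$, either a congruence shows there are no zeros, or Strassmann allows at most one zero, which a direct computation of small terms then rules out.

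\textbf{Alternative and main obstacle.} Another route is archimedean: write $u_\ell(n)$ in terms of the eigenvalues, show that the dominant term beats the others for $n\geqslant n_0$ with an explicit $n_0$, and check $n<n_0$ by hand. This fails if the dominant eigenvalues form a complex pair, whose oscillation can make the leading term small. That is likely here, because the example is built so that $\lambda_1$ is transcendental and $A$ probably has such a pair. I therefore expect the main difficulty to be the $p$-adic step: choosing $p$ so that the Strassmann bound is tight, meaning $c_1\not\equiv 0$ modulo the relevant power of $p$. I would try small primes first, e.g. $p=5$ or $7$, computing the order of $A$ modulo $p$ and the linear coefficient $\ell\bigl(\log(A^k)A^re_1\bigr)$ modulo $p^2$.
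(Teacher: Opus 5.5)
The paper does no arithmetic here. It reads the statement off \cite[Lemma 7.4]{BDJK}: the orbit $A^ne_1$ avoids every plane supporting a $2$-dimensional cone of the fan of $\p^3$, in particular $\{x_i=0\}$ and $\{x_i=x_j\}$. A self-contained Skolem-type verification would be a legitimate alternative route. Yours, however, has a genuine gap exactly where the content lies, namely the three forms with $\ell(e_1)=0$. None of the required computations is carried out; the outcomes are only ``expected''. More seriously, your fallback step is invalid. Take a class $r\not\equiv 0 \pmod k$ with $u_\ell(r)\equiv 0\pmod p$ and Strassmann bound $N=1$. Then Weierstrass preparation (or Hensel's lemma) gives an actual zero $\zeta\in\mathbb{Z}_p$ of $n\mapsto u_\ell(kn+r)$. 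No computation of finitely many terms can show that $\zeta$ is not a large positive integer. To close this you would have to locate $\zeta$ $p$-adically and exclude its residue class by congruences modulo further primes (a CRT sieve). Alternatively, bound the possible zeros archimedeanly via Baker-type lower bounds for linear forms in logarithms, and then check the finitely many remaining $n$.

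This case is unavoidable with the primes you suggest.
\begin{itemize}
\item For $p=5$, $\chi_A\equiv t^3+3t^2+4$ is irreducible and $A$ has order $31$ modulo $5$. Writing $u_\ell(n)\equiv\mathrm{Tr}_{\mathbb{F}_{125}/\mathbb{F}_5}(c\lambda^n)$ with $c\neq 0$ shows that each of $a_{n2}$, $a_{n3}$, $a_{n2}-a_{n3}$ vanishes modulo $5$ at exactly six residues modulo $31$, five of them non-zero. For $a_{n2}$ these are $n\equiv 2,3,8,20,24$.
\item The natural prime is $p=2$, since $A\equiv I\pmod 2$ and $A^2\equiv I\pmod 4$. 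This does prove $a_{n2}\neq a_{n3}$ for all $n\geqslant 1$. However, $a_{n2}$ on odd $n$ and $a_{n3}$ on even $n$ each carry an extra zero in $\mathbb{Z}_2$. This is visible in the fact that $2^{11}$ divides both $a_{11,2}=-3303424$ and $a_{12,3}=1476608$. So even your expectation that the class $r=0$ has only the zero $n=0$ fails there.
\end{itemize}

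Two smaller corrections.
\begin{itemize}
\item $\mathrm{tr}\,A=7$, not $9$, so $\chi_A(t)=t^3-7t^2+15t-1$. Its real root is $\approx 0.069$ and the dominant complex pair has modulus $\approx 3.81$, which confirms your worry about the archimedean route.
\item Step~1 is immediate, contrary to your remark that modulo $2$ fails. Since $A\equiv I\pmod 2$, $a_{n1}$ is odd while $a_{n2}$ and $a_{n3}$ are even. This settles $v_1$, $v_1-v_2$ and $v_1-v_3$ at once.
\end{itemize}
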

\begin{proof}
 This assertion is proved in \cite{BDJK} but it is not formulated as a separate lemma there. Here we show how to deduce this fact from results in that paper.

 In \cite[Section 7.2]{BDJK} the authors define the matrix $A$. In order to show that the matrix $A$ works well for their purposes they need to prove that $A$ satisfies the condition of \cite[Theorem 3.6]{BDJK}.

 Denote by $\Sigma(\p^3)\subset \RR^3$ the standard toric fan of $\p^3$.  The condition of \cite[Theorem 3.6]{BDJK} is that $A^n v$ lies in the interior of $3$-dimensional cones of $\Sigma(\p^3)$ for vectors $v$ in the finite set of vectors including $e_1$ and all $n\geqslant 1$. In order to show that the matrix $A$ satisfies this condition the authors check a stronger result in \cite[Lemma 7.4]{BDJK}; namely, they prove that $A^n v$ does not lie in any plane supporting $2$-dimensional cones of $\Sigma(\p^3)$ for all $v$ in their finite set and all $n\geqslant 1$.

 Therefore, we conclude that for all $n\geqslant 1$ the vector $A^ne_1$ does not lie in planes $\{x_i = 0\}$ for $i = 1,2,3$ and in planes $\{x_i = x_j\}$ for $1\leqslant i<j\leqslant 3$. Then the result follows immediately.
\end{proof}
\begin{lemma}\label{lemma: preimages of v}
 Let $A$ be as in \textup{\eqref{equation: matrix A for transcendental}} and let $v$ be the vector $v = (1,1,0)^{t}$. If we denote $(A^{-n} v)^t = (a_{n1},a_{n2}, a_{n3}) $ then $a_{n1}, a_{n3}>0$ and $a_{n2}<0$ for all $n\geqslant 1$.
\end{lemma}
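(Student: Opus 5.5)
The plan is to find an explicit closed convex polyhedral cone $K\subset\RR^3$ such that $v\in K$ (or at least $A^{-1}v\in K$), $A^{-1}K\subset K$, and $K\setminus\{0\}$ lies in the open orthant $\{x_1>0,\ x_2<0,\ x_3>0\}$. Then induction on $n$ gives the statement. First I compute $A^{-1}$. One has $\det A=1$, and the adjugate gives
\[
A^{-1}=\begin{pmatrix}13&34&48\\-8&-21&-30\\6&16&23\end{pmatrix},
\]
so $A^{-1}v=(47,-29,22)^t$, which already has the required sign pattern.

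The orthant itself is not $A^{-1}$-invariant. For example, the first coordinate of $A^{-1}(x_1,-y,x_3)^t$ is $13x_1-34y+48x_3$, which can be negative. So a narrower cone is needed, and I would shape it along the dynamics of $A^{-1}$. Let $\chi(t)=\det(t-A)$, which has $\operatorname{tr}A=7$, and compute its roots numerically. I expect $A$ to have a real eigenvalue $\mu$ with $|\mu|<1$ that is strictly smaller in absolute value than the other two. Then $A^{-1}$ has a dominant eigenvalue $\mu^{-1}$ with eigenvector $w$, and the sign pattern of $A^{-1}v$ strongly suggests $w$ is proportional to a vector in the open orthant $(+,-,+)$. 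This explains why the iterates stay there.

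To make this rigorous without estimating the subdominant eigenvalues, I would take $K$ to be the cone spanned by a few vectors near the ray $\RR_{>0}w$. Natural starting generators are $A^{-1}v$, $A^{-2}v$, $A^{-3}v$, perturbed slightly if needed. I then check two things. The first is that each generator has sign pattern $(+,-,+)$. The second is that $A^{-1}$ of each generator is a nonnegative combination of the generators. Equivalently, I would write $K$ by finitely many linear inequalities $\ell_j\geqslant 0$ and check $\ell_j\circ A^{-1}\geqslant 0$ on the generators. These are finite integer computations.

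If the dominant eigenvalue is real, the cone closes up after finitely many trials. The main obstacle is choosing the generators so that both invariance and containment in the open orthant hold simultaneously.

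As a fallback, I would use the eigen-decomposition $v=c_1w+c_2w_2+c_3w_3$, with $c_1\neq 0$ by irreducibility of $\chi$. I would bound the error term $\|A^{-n}(c_2w_2+c_3w_3)\|\leqslant C\rho^n$, where $\rho$ is the subdominant spectral radius of $A^{-1}$. This would show the sign pattern for all $n\geqslant n_0$, and the finitely many remaining $n<n_0$ would be checked directly.
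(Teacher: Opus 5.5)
Your framework is the same as the paper's, since its proof is an invariant-cone argument. But the proposal stops exactly where the proof has to start: you never exhibit the cone $K$. For a concrete numerical statement like this one, producing and checking $K$ is the entire content.

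The one candidate you name fails outright. With $M=A^{-1}$ one has $\chi_M(t)=t^3-15t^2+7t-1$, so by Cayley--Hamilton
\[
M^4v=15\,M^3v-7\,M^2v+Mv .
\]
Since $\chi_M$ is irreducible over $\QQ$ and $v$ is rational, the vectors $Mv,M^2v,M^3v$ form a basis. The representation is therefore unique, so $M^4v\notin\operatorname{cone}(Mv,M^2v,M^3v)$. Because $\operatorname{cone}(M^kv,M^{k+1}v,M^{k+2}v)=M^{k-1}\operatorname{cone}(Mv,M^2v,M^3v)$, no cone spanned by three consecutive iterates is invariant either.

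Your spectral guess is correct. The polynomial $\chi_A(t)=t^3-7t^2+15t-1$ has a real root $\mu\approx 0.069$ and two complex roots of modulus $\mu^{-1/2}\approx 3.81$, so a suitable cone does exist. However, ``perturbed slightly if needed'' and ``closes up after finitely many trials'' are neither a procedure with a termination guarantee nor a certificate.

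The missing idea is to find a basis in which $A^{-1}$ becomes a positive matrix. The paper writes $A^{-1}=RCR^{-1}$ with
\[
R=\begin{pmatrix}1&-2&3\\0&1&-2\\0&0&1\end{pmatrix},\qquad C=\begin{pmatrix}3&2&4\\4&3&6\\6&4&9\end{pmatrix},
\]
where $C$ is strictly positive. Then $K=RC\,\RR^3_{\geqslant 0}$ is exactly the kind of cone you are looking for:
\begin{itemize}
\item $A^{-1}K=RC\cdot C\,\RR^3_{\geqslant 0}\subset K$;
\item $A^{-1}v=RC\,(3,1,0)^t\in K$, because $R^{-1}v=(3,1,0)^t$;
\item the rows of $RC$, namely $(13,8,19)$, $(-8,-5,-12)$ and $(6,4,9)$, are strictly positive, strictly negative and strictly positive, so every nonzero element of $K$ has sign pattern $(+,-,+)$.
\end{itemize}

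Your eigen-decomposition fallback is a legitimate alternative in principle. The gap between the dominant eigenvalue $\mu^{-1}\approx 14.5$ and the subdominant modulus $\approx 0.26$ is huge. As written, though, it contains no estimates. You would need rigorous enclosures of the irrational eigendata, including the sign pattern of the dominant term $c_1w$, so it too leaves the verification undone.
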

\begin{proof}
 Denote by $R$ and $C$ the following matrices and compute the product $RC$:
 \begin{align*}
  R = \begin{pmatrix}
       1 &-2&3\\0&1&-2\\0&0&1
      \end{pmatrix};&&
  C = \begin{pmatrix}
       3&2&4\\4&3&6\\6&4&9
      \end{pmatrix}; &&
  RC = \begin{pmatrix}
        13&8&19\\-8&-5&-12\\6&4&9
       \end{pmatrix}.
 \end{align*}
 One can check that $A^{-1} = RCR^{-1}$ and $R^{-1}v = (3,1,0)^t$. Then $A^{-n}v  = (RC)C^{n-1}(3,1,0)^t$ for all $n\geqslant 1$. Since all entries of $C$ are strictly positive so are entries of $C^{n-1}$. Thus, the coordinates of the vector $C^{n-1}v$ are strictly positive. Since all entries in the first and the third row of $RC$ are strictly positive and the entries in its second row are strictly negative we complete the proof.
\end{proof}

The automorphism $f = g\circ h_A$ is defined over $\ZZ$; thus, one can consider its action over the field $\QQ(t)$ where $t$ is transcendental over $\QQ$. Consider the standard valuation $\nu\colon \QQ(t)^{\times}\to \ZZ$ such that $\nu(c) = 0$ for all $c\in \QQ^{\times}$ and $\nu(t^m) = m$ for all $m\in \ZZ$. Define the map $\tau\colon \p^3_{\QQ(t)}\to \ZZ^3$ in the following way; for a point~$(x_0:x_1:x_2:x_3)\in \p^3_{\QQ(t)}$ we set:
\begin{equation}\label{equation: tau}
 \tau(x_0:x_1:x_2:x_3) = (\nu(x_1)-\nu(x_0),\nu(x_2)-\nu(x_0),\nu(x_3)-\nu(x_0)).
\end{equation}
This map is well-defined on the stated open subset of projective space: rescaling all homogeneous coordinates by a common nonzero scalar leaves the valuation differences unchanged. Moreover, $\tau$ is compatible with birational automorphisms $h_A$ and $g$ defined above.
\begin{lemma}\label{lemma: valuation of images of a point}
 If $x$ is a point $x = (x_0:x_1:x_2:x_3)\in \p^3_{\QQ(t)}\setminus \bigcup_{i=0}^3\{x_i = 0\}$ then  $\tau(h_A(x)) = A\tau(x)$. Moreover, if $\tau(x) = (\tau_1,\tau_2,\tau_3)^t$ where $\min\{0,\tau_1,\tau_2,\tau_3\}$ is attained uniquely $x$ does not lie in $\Ind(g)$ and one has $\tau(g(x)) = \tau(x)$.
\end{lemma}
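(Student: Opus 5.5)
The plan is to treat the two assertions separately: the first by a direct valuation computation, the second by reducing modulo $t$ to a coordinate point and using an explicit polynomial description of $g$.

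\emph{The toric part.} For $x$ in the torus, pass to affine coordinates $z_i=x_i/x_0$. Then $\tau(x)=(\nu(z_1),\nu(z_2),\nu(z_3))$, since $\nu$ is a homomorphism $\QQ(t)^\times\to\ZZ$. The map $h_A$ sends $z$ to the monomials $z^A$, whose $i$-th coordinate is $\prod_j z_j^{a_{ij}}$. Hence $\nu\bigl(h_A(z)_i\bigr)=\sum_j a_{ij}\nu(z_j)$, which is exactly $\tau(h_A(x))=A\tau(x)$. The image stays in the torus, so no indeterminacy issue arises.

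\emph{The involution $g$.} Put $\tau_0=0$, so that $\tau_j=\nu(x_j)-\nu(x_0)$ for $j=0,1,2,3$, and suppose the minimum of these is attained only at the index $k$. After rescaling the homogeneous coordinates by a power of $t$ and a constant, I may assume $\nu(x_k)=0$ and $\nu(x_j)>0$ for $j\neq k$. Then the reduction of $x$ modulo $t$ is the coordinate point $p_k$.

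\emph{$g$ is regular near $p_k$.} The point $Bp_k$ is the $k$-th column of $B$, and all its entries are $\pm1$. So $Bp_k$ lies off the coordinate hyperplanes, $J$ is a local isomorphism there, and $J(Bp_k)=Bp_k$. Consequently $g(p_k)=p_k$, and $g$ is regular in a neighbourhood of $p_k$. Because $x$ specializes to $p_k$, this will give $x\notin\Ind(g)$. To make this precise I write $g$ by the cubic forms $G_j(x)=\sum_i (B^{-1})_{ji}\prod_{l\neq i}(Bx)_l$ and observe $G(p_k)\neq 0$.

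\emph{Key structural claim: $g$ preserves each coordinate hyperplane.} I will show $G_j(x)=x_j\,Q_j(x)$ for quadratic forms $Q_j$; this is the point of the special choice of $B$. The plan is to expand $G$ explicitly for the matrix \eqref{equation: matrix B}, or equivalently to check that $g$ maps each hyperplane $\{x_j=0\}$ into itself. This explicit verification is the main obstacle. A purely first-order argument does not suffice: one can check that $dg_{p_k}=-\mathrm{id}$, because $dJ=-\mathrm{id}$ at points with all coordinates $\pm1$, but quadratic error terms could still spoil the valuations of the coordinates with large $\tau_j$. The divisibility $x_j\mid G_j$ removes this problem.

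\emph{Conclusion.} Granting the factorization, I also check that $Q_j(p_k)\neq0$ for all $j,k$; this amounts to $Q_j(p_k)$ being $\pm$ an entry-type expression in $\pm1$'s, which is nonzero. Then $\nu(Q_j(x))=0$, since $x\equiv p_k \pmod t$ with $\nu(x_k)=0$ and $Q_j(p_k)$ is a nonzero rational number. Therefore
\[
\nu(G_j(x))=\nu(x_j)\quad\text{for all } j,
\]
and taking differences with $j=0$ gives $\tau(g(x))=\tau(x)$.
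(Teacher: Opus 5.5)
Your strategy is the paper's. The paper also rests on a factorization $G_j=x_jQ_j$. Its observation that every $b_i=(Bx)_i$ has valuation $\min\{0,\tau_1,\tau_2,\tau_3\}$ (each $b_i$ is a $\pm1$-combination of the coordinates with a unique term of minimal valuation) is exactly your reduction of $x$ to $p_k$ modulo $t$.

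However, as written the proposal leaves the crux unproved. The factorization, which you yourself call the main obstacle, is never verified. The nonvanishing of $Q_j(p_k)$ rests only on the phrase ``an entry-type expression in $\pm1$'s''. Since $Q_j(p_k)$ is just the $x_k^2$-coefficient of a quadratic form you have not computed, nothing you wrote rules out $Q_j(p_k)=0$. If it were $0$, the key equality $\nu(G_j(x))=\nu(x_j)$ would fail.

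Both points are settled by the short computation the paper performs. With $b=Bx$, the rows of $B$ give
\[
b_0+b_1=2x_0,\qquad b_1+b_2=2x_1,\qquad b_2+b_3=2x_2,\qquad b_3-b_0=2x_3,
\]
so $B^{-1}$ can be read off directly. Clearing the denominator $b_0b_1b_2b_3$ then gives
\[
g(x)=B^{-1}\Bigl(\tfrac1{b_0}:\tfrac1{b_1}:\tfrac1{b_2}:\tfrac1{b_3}\Bigr)=\bigl(x_0b_2b_3:x_1b_0b_3:x_2b_0b_1:-x_3b_1b_2\bigr).
\]
The paper's display drops the minus sign on the last entry, which is harmless for valuations. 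Hence each $Q_j$ is $\pm$ a product of two of the linear forms $b_i$. Since $b(p_k)$ is the $k$-th column of $B$, whose entries are all $\pm1$, this gives $Q_j(p_k)=\pm1$.

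Alternatively, once $x_j\mid G_j$ is known, your own first-order remark finishes the job. First, $Q_k(p_k)=G_k(p_k)\neq0$. Second, in the affine chart at $p_k$ one has $dg_{p_k}=\mathrm{diag}\bigl(Q_j(p_k)/Q_k(p_k)\bigr)_{j\neq k}$, so $dg_{p_k}=-\mathrm{id}$ forces $Q_j(p_k)=-Q_k(p_k)\neq0$. With either addition, the rest of your argument is complete.
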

\begin{proof}
  One can choose the homogeneous coordinates of $x$ so that $x = (1:z_1:z_2:z_3)$ where $z_1,z_2,z_3\in \QQ(t)$. Then $h_A(x) = (1: z^{A e_1}: z^{Ae_2}: z^{Ae_3})$ where $e_1,e_2,e_3$ form the standard basis in $3$-dimensional space. If we denote the entries of $A$ by $(a_{ij})$ one can check that $\nu(z^{A e_i}) = \nu(z_1^{a_{i1}}z_2^{a_{i2}}z_3^{a_{i3}}) = \sum_{j=1}^3 a_{ij}\nu(z_j)$. Therefore, the proof of the equality $\tau(h_A(x)) = A\tau(x)$ is complete.

 Recall that $g = B^{-1}\circ J\circ B$ where $B\in \PGL(4,\QQ)$ is defined in \eqref{equation: matrix B} and $J$ is the Cremona involution of $\p^3$. For $i = 0,\dots, 3$ denote by $b_i$ the $i$-th component of the vector $B(1,z_1,z_2,z_3)^t$. Then one can easily check the following equalities:
 \begin{align}\label{equation: bi+bj}
  &b_0+b_1 = 2; && b_1+b_2 = 2z_1;&& b_2+b_3 = 2z_2; && b_3 - b_0 = 2z_3.
 \end{align}
 By assumption $\min\{0,\tau_1,\tau_2,\tau_3\}$ is attained uniquely; thus, $\nu(b_i) = \min(0,\nu(z_1),\nu(z_2),\nu(z_3)) = \nu(b_j)$ for all~\mbox{$i,j = 0,\dots,3$}. Moreover, since coordinates $z_1,z_2$ and $z_3$ are non-zero so are $b_i$ for $i = 0,\dots, 3$. Now consider $\tau(g(x))$:
 \begin{multline*}
  \tau(g(1:z_1:z_2:z_3)) = \tau(B^{-1}(J(b_0:b_1:b_2:b_3)))) = \tau\left(B^{-1}\left(\frac{1}{b_0}:\frac{1}{b_1}:\frac{1}{b_2}:\frac{1}{b_3}\right)\right) = \\
  = \tau\left(\frac{b_0+b_1}{b_0b_1}: \frac{b_1+b_2}{b_1b_2}: \frac{b_2+b_3}{b_2b_3}: \frac{b_3-b_0}{b_0b_3}\right)
  = \tau\left(1: \frac{b_0(b_1+b_2)}{b_2(b_0+b_1)} : \frac{b_0b_1(b_2+b_3)}{b_2b_3(b_0+b_1)}: \frac{b_1(b_3-b_0)}{b_3(b_0+b_1)}\right) = \\ =
  \tau\left(1: z_1\frac{b_0}{ b_2}: z_2\frac{b_0b_1  }{b_2b_3}: z_3\frac{b_1}{b_3}\right).
 \end{multline*}
 The last equality is due to the observation \eqref{equation: bi+bj}. Since in our assumption the valuation $\nu$ has the same value on all $b_i$ we conclude that $\tau(g(x)) =\tau(x)$. This completes the proof.
\end{proof}
In view of Lemma \ref{lemma: valuation of images of a point} we can find a point in $\p^3_{\QQ(t)}$ whose forward $f$-orbit avoids sets $\Ind(f)$ and $\Exc(f)$.
\begin{corollary}\label{corollary: the point p and its images}
 Fix the point $p = (1:t:1:2-t)$ in  $\p^3_{\QQ(t)}$. For any $n\geqslant 0$ the point $p$ does not lie in $\Ind(f^n)$ and $f^n(p)$ does not lie in $\Exc(f)$.
\end{corollary}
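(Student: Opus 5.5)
The plan is to track the point $p$ through its forward orbit using the valuation map $\tau$, and to show by induction on $n$ that every iterate $f^n(p)$ stays in the open torus $T=\p^3\setminus\bigcup_{i=0}^3\{x_i=0\}$ and satisfies $\tau(f^n(p))=A^ne_1$. The whole argument rests on two earlier results: Lemma \ref{lemma: valuation of images of a point} and the lemma quoted from \cite[Lemma 7.4]{BDJK}.

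\emph{Base case.} First compute $\tau(p)$. Since $\nu(t)=1$, $\nu(1)=0$ and $\nu(2-t)=0$, we get $\tau(p)=(1,0,0)^t=e_1$, and all coordinates of $p$ are non-zero, so $p\in T$.

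\emph{Inductive step.} Suppose $x=f^n(p)\in T$ and $\tau(x)=A^ne_1$.
\begin{enumerate}
\item Since $h_A$ is a regular automorphism of $T$, it is defined at $x$, and $y=h_A(x)\in T$ satisfies $\tau(y)=A^{n+1}e_1$ by Lemma \ref{lemma: valuation of images of a point}.
\item By \cite[Lemma 7.4]{BDJK}, the numbers $0,a_{n+1,1},a_{n+1,2},a_{n+1,3}$ are pairwise distinct, so $\min\{0,\tau_1,\tau_2,\tau_3\}$ is attained uniquely at $y$. Lemma \ref{lemma: valuation of images of a point} then gives $y\notin\Ind(g)$ and $\tau(g(y))=\tau(y)=A^{n+1}e_1$.
\item The proof of that lemma shows more. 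All $b_i=b_i(y)$ are non-zero, so $B(y)$ lies in the torus, where $J$ is an isomorphism. Moreover $g(y)=(1:z_1b_0/b_2:\dots)$ has non-zero coordinates, so $g(y)\in T$.
\end{enumerate}
This gives $f^{n+1}(p)\in T$ with $\tau(f^{n+1}(p))=A^{n+1}e_1$. Along the way, at each step the next map is regular at the current point, so the composite $f^n$ is regular at $p$, i.e.\ $p\notin\Ind(f^n)$.

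\emph{The exceptional locus.} For $x=f^n(p)$, consider the open set
\[
U=T\cap h_A^{-1}\bigl(\{b_0b_1b_2b_3\neq 0\}\bigr).
\]
On $U$, $f$ is an isomorphism onto its image: it is the composition of the torus automorphism $h_A$, the linear map $B$ landing in the torus of $\p^3$, the Cremona involution $J$ restricted to that torus, and the linear map $B^{-1}$. Hence $\Exc(f)\cap U=\emptyset$. By step 3 above (with $y=h_A(x)$), the $b_i(h_A(x))$ are non-zero, so $x\in U$, and therefore $f^n(p)\notin\Exc(f)$. The case $n=0$ is included, since $\tau(h_A(p))=Ae_1$ also has a unique minimum.

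The main obstacle I expect is making sure the inductive invariant really propagates: after applying $g$, the point must stay in $T$ with the same $\tau$, and the composite must be honestly regular at $p$. The first is handled by reusing the explicit formula for $g(1:z_1:z_2:z_3)$ from the proof of Lemma \ref{lemma: valuation of images of a point}, together with the non-vanishing of the $b_i$. For the second, regularity of a composition at a point follows from regularity of each factor at the successive images. The identification of $\Exc(f)$ inside the complement of $U$ should be routine.
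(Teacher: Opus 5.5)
Your proposal is correct and follows the same route as the paper. You induct on $n$, keeping $f^n(p)$ in the torus with $\tau(f^n(p))=A^ne_1$, and at each step you use the pairwise distinctness of $0,a_{n1},a_{n2},a_{n3}$ from \cite[Lemma 7.4]{BDJK} to apply Lemma \ref{lemma: valuation of images of a point}. Your explicit open set $U$, on which $f$ is an isomorphism onto its image, and your remark that regularity of each factor at the successive images gives $p\notin\Ind(f^n)$, just spell out what the paper states in one sentence: that $f^n(p)$ lies neither in $\Ind(f)$ nor in $\Exc(f)$.
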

\begin{proof}
 Note that the point $p$ lies in $\p^3_{\QQ(t)}\setminus \bigcup_{i=0}^3\{x_i = 0\}$ and $\tau(p)  = (1,0,0)^t = e_1$. Thus, by Lemma \ref{lemma: valuation of images of a point} we have $\tau(h_A(p)) = A e_1$. In particular, the coordinates of $\tau(h_A(p))$ are pairwise distinct and none of them vanishes. Therefore, by Lemma \ref{lemma: valuation of images of a point} the point $h_A(p)$ does not lie in $\Ind(g)$ and $\tau(f(p)) = \tau(g(h_A(p))) = Ae_1$. Proceeding by induction we deduce that all coordinates of $f^n(p)$ are non-zero, that $\tau(f^n(p)) = A^ne_1$; therefore, all coordinates of $\tau(f^n(p))$ are pairwise distinct and non-zero. This implies that $f^n(p)$ lies neither in $\Ind(f)$ nor in $\Exc(f)$ for all $n\geqslant 0$. This completes the proof.
\end{proof}

\begin{corollary}\label{corollary: point on H and its backword images}
 Consider the divisor $H = \{x_0 - x_1+x_2-x_3 = 0\}\subset \p^3_{\mathbb{Q}(t)}$. Then $f^{-n}(H) = (0:0:1:0)$ for all $n\geqslant 1$.
\end{corollary}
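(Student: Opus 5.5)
The plan is to compute the proper image of $H$ under $f^{-n}=(h_A^{-1}\circ g)^n$ (recall $g$ is an involution) by following an arc that approaches a general point of $H$. The valuation bookkeeping is done with $\tau$ and Lemma \ref{lemma: valuation of images of a point}, and Lemma \ref{lemma: preimages of v} controls the signs.

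\textbf{Step 1: the first application of $g$.} The linear form $x_0-x_1+x_2-x_3$ is the first row of $B$, so $B(H)=\{y_0=0\}$. The involution $J$ contracts this plane to $(1:0:0:0)$. By the relations \eqref{equation: bi+bj}, $B^{-1}(1:0:0:0)=(1:0:0:-1)$. This point lies on coordinate planes, where $h_A^{-1}$ is not defined, so I cannot simply compose and must keep track of how points approach it.

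Take a general point of $H$, written as $B^{-1}(0:b_1:b_2:b_3)$ with $b_i$ generic and nonzero. Work over $K(t)$, where $K\supset\QQ$ contains the $b_i$ as independent transcendentals, with $\nu$ the $t$-adic valuation. Deform the point to $x_t=B^{-1}(t:b_1:b_2:b_3)$. Then
\[
g(x_t)=B^{-1}\left(1:\frac{t}{b_1}:\frac{t}{b_2}:\frac{t}{b_3}\right).
\]
Using \eqref{equation: bi+bj}, its affine coordinates are $z_1=\tfrac12\left(\tfrac{t}{b_1}+\tfrac{t}{b_2}\right)\big/(\cdots)$, $z_2$ of the same shape, and $z_3\to-1$, all nonzero. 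A direct check should give $\tau(g(x_t))=(1,1,0)^t=v$.

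\textbf{Step 2: induction on $n$.} I apply Lemma \ref{lemma: valuation of images of a point} with $A^{-1}$ in place of $A$; its proof works verbatim for any integer matrix and over any field of the form $K(t)$. This gives $\tau(h_A^{-1}g(x_t))=A^{-1}v$, with all coordinates nonzero since the $b_i$ are generic.

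By Lemma \ref{lemma: preimages of v}, $A^{-n}v=(a_{n1},a_{n2},a_{n3})^t$ with $a_{n1},a_{n3}>0>a_{n2}$. Hence $\min\{0,a_{n1},a_{n2},a_{n3}\}$ is attained uniquely, at the second coordinate. The second part of Lemma \ref{lemma: valuation of images of a point} then says the point is not in $\Ind(g)$ and $g$ preserves $\tau$. By induction,
\[
\tau\bigl(f^{-n}(x_t)\bigr)=A^{-n}v\qquad\text{for all } n\geqslant1,
\]
and all coordinates stay nonzero along the way.

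\textbf{Step 3: specialization.} A point $(1:z_1:z_2:z_3)$ with valuations $(a_{n1},a_{n2},a_{n3})$ can be rescaled by $t^{-a_{n2}}$. After rescaling, the coordinates have valuations $(-a_{n2},\,a_{n1}-a_{n2},\,0,\,a_{n3}-a_{n2})$, and all of these except the third are strictly positive. So as $t\to0$ the arc $f^{-n}(x_t)$ tends to $(0:0:1:0)$.

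This limit does not depend on the general point of $H$ chosen, that is, on $b_1,b_2,b_3$. The proper image $f^{-n}(H)$ is irreducible and is the closure of the set of such limits over general points of $H$. Therefore $f^{-n}(H)=(0:0:1:0)$.

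\textbf{Main obstacle.} The delicate part is justifying rigorously that the proper image of the contracted divisor equals the common limit of these arcs. I would argue with a resolution $p,q\colon V\to\p^3$ of $f^{-n}$. The proper image is $q(\widetilde H)$, where $\widetilde H$ is the strict transform of $H$. A general point of $\widetilde H$ is the limit of lifts of arcs $x_t$ approaching $H$ transversally, and the arc $t\mapsto(t:b_1:b_2:b_3)$ is such a transversal arc. So $q(\widetilde H)$ is the set of limits computed in Step 3.

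A secondary check is that every intermediate point has all coordinates nonzero. Lemma \ref{lemma: valuation of images of a point} requires this, and it follows from the valuations being finite.
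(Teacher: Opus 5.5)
Your proposal is correct and follows the paper's route. Both arguments start from the direction $(1,1,0)^t$ produced when $g$ contracts $H$ to $(1:0:0:-1)$, propagate $\tau$ along the backward orbit with Lemma \ref{lemma: valuation of images of a point} (applied to $h_A^{-1}$) and the sign pattern of Lemma \ref{lemma: preimages of v}, and read off $(0:0:1:0)$ from $\nu(x_1),\nu(x_3)>\nu(x_0)>\nu(x_2)$. Your arc $B^{-1}(t:b_1:b_2:b_3)$ through the generic point of $H$, specialized at $t=0$ using that $f^{-n}$ is regular there (so transversality is not even needed), is just a cleaner way to make precise the paper's claim that $\tau(g(H))$ is ``proportional to'' $(1,1,0)^t$ and its closing ``defined over $\ZZ$'' step.
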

\begin{proof}
 Let $q = (x_0(t):x_1(t):x_2(t):x_3(t))$ be a very general point on the divisor $H$ over the field $\mathbb{Q}(t)$. One can compute $g(H) = (Q(t):0:0:-Q(t))$; thus, $\tau(g(H))$ is proportional to the vector $(1,1,0)^t$. By Lemmas \ref{lemma: preimages of v} and \ref{lemma: valuation of images of a point} we deduce that $\tau(f^{-n}(H))$ is proportional to the vector $v_n = (a_{n1},a_{n2}, a_{n3}) $ such that~\mbox{$a_{n1}, a_{n3}>0$} and $a_{n2}<0$ for all $n\geqslant 1$. Thus, all points in $f^{-n}(H)$ are $(x_0(t):x_1(t):x_2(t):x_3(t))$ such that $\nu(x_i)>\nu(x_0)>\nu(x_2)$ for $i =1,3$. The set $f^{-n}(H)$ is a Zariski closed subset of $\p^3$ and since $f$ is defined over $\ZZ$ then so is $f^{-n}(H)$. Therefore, the only point which satisfies this description is $(0:0:1:0)$. This completes the proof.
\end{proof}
Now we are ready to prove Theorem \ref{theorem: non-ps-reg and non AS}.
\begin{proof}[Proof of Theorem \textup{\ref{theorem: non-ps-reg and non AS}}]
 By \cite[Section 7.2]{BDJK} the dynamical degree of $f$ is a transcendental number. Thus, by Lemma \ref{lemma: AS implies lambda is algebraic} the birational automorphism $f$ does not admit an algebraically stable model.

 It remains to show that $f$ is non-pseudo-regularizable. By the first assertion of Theorem \ref{theorem: CAT0} in order to do this it suffices to find a divisor $H\subset\p^3$ such that the proper image of $H$ under $f^n$ is a divisor for $n\geqslant 1$ and is a point for $n\leqslant -1$. Consider the following divisor $H$:
 \[
  H = \{x_0 - x_1+x_2-x_3 = 0\}.
 \]
 By Corollary \ref{corollary: point on H and its backword images} we deduce that $f^{n}(H) = (0:0:1:0)$ for all $n\leqslant -1$.

 Now consider the point $p = (1:t:1:2-t)$ where $t$ is any transcendental number in $\KK$. On the one hand, the point $p$ lies in $H$. On the other hand, by Corollary \ref{corollary: the point p and its images} for any $n\geqslant 0$ the point $p$ does not lie in~$\Ind(f^n)$ and $f^n(p)$ does not lie in $\Exc(f)$. Therefore, the proper image of $H$ under $f^n$ is a hypersurface for all $n\geqslant 1$. This completes the proof.
\end{proof}

\bibliographystyle{alpha}
\bibliography{references}

@article{AX,
AUTHOR = {Abboud, M. and Xie, J.},
TITLE = {Dynamical degrees of twisted rational maps},
 JOURNAL = {\rm  arXiv:2608.09275},
      YEAR = {2026},
}

@article {Equivariant_resolution,
    AUTHOR = {Abramovich, D. and Wang, J.},
     TITLE = {Equivariant resolution of singularities in characteristic
              {$0$}},
   JOURNAL = {Math. Res. Lett.},
  FJOURNAL = {Mathematical Research Letters},
    VOLUME = {4},
      YEAR = {1997},
    NUMBER = {2-3},
     PAGES = {427--433}
}

@incollection {Bayle_Beauville,
    AUTHOR = {Bayle, L. and Beauville, A.},
     TITLE = {Birational involutions of {${\bf P}^2$}},
      NOTE = {Kodaira's issue},
   JOURNAL = {Asian J. Math.},
  FJOURNAL = {Asian Journal of Mathematics},
    VOLUME = {4},
      YEAR = {2000},
    NUMBER = {1},
     PAGES = {11--17},
}

@article {Bedford_Kim_Pseudo,
    AUTHOR = {Bedford, E. and Kim, K.},
     TITLE = {Dynamics of (pseudo) automorphisms of 3-space: periodicity
              versus positive entropy},
   JOURNAL = {Publ. Mat.},
  FJOURNAL = {Publicacions Matem\`atiques},
    VOLUME = {58},
      YEAR = {2014},
    NUMBER = {1},
     PAGES = {65--119},
}

@article {BCK,
    AUTHOR = {Bedford, E. and Cantat, S. and Kim, K.},
     TITLE = {Pseudo-automorphisms with no invariant foliation},
   JOURNAL = {J. Mod. Dyn.},
  FJOURNAL = {Journal of Modern Dynamics},
    VOLUME = {8},
      YEAR = {2014},
    NUMBER = {2},
     PAGES = {221--250},
}

@article {BDJK,
    AUTHOR = {Bell, J. and Diller, J. and Jonsson, M. and
              Krieger, H.},
     TITLE = {Birational maps with transcendental dynamical degree},
   JOURNAL = {Proc. Lond. Math. Soc. (3)},
  FJOURNAL = {Proceedings of the London Mathematical Society. Third Series},
    VOLUME = {128},
      YEAR = {2024},
    NUMBER = {1},
     PAGES = {Paper No. e12573, 47},
      ISSN = {0024-6115,1460-244X},
}

@article {Blanc_Pseudo,
    AUTHOR = {Blanc, J.},
     TITLE = {Dynamical degrees of (pseudo)-automorphisms fixing cubic
              hypersurfaces},
   JOURNAL = {Indiana Univ. Math. J.},
    VOLUME = {62},
      YEAR = {2013},
    NUMBER = {4},
     PAGES = {1143--1164},
}

@article {Blanc-Cantat,
    AUTHOR = {Blanc, J. and Cantat, S.},
     TITLE = {Dynamical degrees of birational transformations of projective
              surfaces},
   JOURNAL = {J. Amer. Math. Soc.},
  FJOURNAL = {Journal of the American Mathematical Society},
    VOLUME = {29},
      YEAR = {2016},
    NUMBER = {2},
     PAGES = {415--471},
}

@book {Perron_Frobenious_book,
    AUTHOR = {Brualdi, R. A. and Cvetkovi\'{c}, D.},
     TITLE = {A combinatorial approach to matrix theory and its
              applications},
    SERIES = {Discrete Mathematics and its Applications (Boca Raton)},
 PUBLISHER = {CRC Press, Boca Raton, FL},
      YEAR = {2009},
     PAGES = {xvi+267},
}

@article {CDX,
    AUTHOR = {Cantat, S. and D\'{e}serti, J. and Xie, J.},
     TITLE = {Three chapters on {C}remona groups},
   JOURNAL = {Indiana Univ. Math. J.},
  FJOURNAL = {Indiana University Mathematics Journal},
    VOLUME = {70},
      YEAR = {2021},
    NUMBER = {5},
     PAGES = {2011--2064},
}

@article {Diller_Favre,
    AUTHOR = {Diller, J. and Favre, C.},
     TITLE = {Dynamics of bimeromorphic maps of surfaces},
   JOURNAL = {Amer. J. Math.},
  FJOURNAL = {American Journal of Mathematics},
    VOLUME = {123},
      YEAR = {2001},
    NUMBER = {6},
     PAGES = {1135--1169},
     }

@incollection {Dolgachev_Iskovskikh,
    AUTHOR = {Dolgachev, I. and Iskovskikh, V.},
     TITLE = {Finite subgroups of the plane {C}remona group},
 BOOKTITLE = {Algebra, arithmetic, and geometry: in honor of {Y}u. {I}.
              {M}anin. {V}ol. {I}},
    SERIES = {Progr. Math.},
    VOLUME = {269},
     PAGES = {443--548},
 PUBLISHER = {Birkh\"{a}user Boston, Boston, MA},
      YEAR = {2009},
}

@article {Kuznetsova_Regularizations,
    AUTHOR = {Kuznetsova, A.},
     TITLE = {Regularizations of pseudo-automorphisms with positive
              algebraic entropy},
   JOURNAL = {J. Mod. Dyn.},
  FJOURNAL = {Journal of Modern Dynamics},
    VOLUME = {19},
      YEAR = {2023},
     PAGES = {237--268}
}

@article {Lonjou_Urech_CAT0,
    AUTHOR = {Lonjou, A. and Urech, C.},
     TITLE = {Actions of {C}remona groups on {${\rm CAT}(0)$} cube
              complexes},
   JOURNAL = {Duke Math. J.},
  FJOURNAL = {Duke Mathematical Journal},
    VOLUME = {170},
      YEAR = {2021},
    NUMBER = {17},
     PAGES = {3703--3743},
}

@article {TTT_dynamical_degrees,
    AUTHOR = {Truong, T. T.},
     TITLE = {Relative dynamical degrees of correspondences over a field of
              arbitrary characteristic},
   JOURNAL = {J. Reine Angew. Math.},
  FJOURNAL = {Journal f\"{u}r die Reine und Angewandte Mathematik. [Crelle's
              Journal]},
    VOLUME = {758},
      YEAR = {2020},
     PAGES = {139--182},
}

\end{document}